\newtheorem{theorem}{Theorem}[section]
\newtheorem{lemma}[theorem]{Lemma}
\newtheorem{proposition}[theorem]{Proposition}
\newtheorem{corollary}[theorem]{Corollary}
\theoremstyle{definition}
\newtheorem{example}[theorem]{Example}
\theoremstyle{remark}
\newtheorem*{remark}{Remark}
\begin{document}

\title[Connected sums and minimally non-Golod complexes]{Connected sums of sphere products and minimally non-Golod complexes} 
\author{Steven Amelotte}
\address{}
\email{steven.amelotte@rochester.edu} 

\keywords{moment-angle complex, polyhedral product, Stanley--Reisner ring, Golod ring, minimally non-Golod complex}
\subjclass[2010]{13F55, 05E45, 55U10, 55P15}

\begin{abstract}
We show that if the moment-angle complex $\mathcal{Z}_K$ associated to a simplicial complex $K$ is homotopy equivalent to a connected sum of sphere products with two spheres in each product, then $K$ decomposes as the simplicial join of an $n$-simplex~$\Delta^n$ and a minimally non-Golod complex. In particular, we prove that $K$ is minimally non-Golod for every moment-angle complex $\mathcal{Z}_K$ homeomorphic to a connected sum of two-fold products of spheres, answering a question of Grbi\'c, Panov, Theriault and Wu.
\end{abstract}

\maketitle

\section{Introduction}

A central construction in toric topology functorially assigns to each finite simplicial complex $K$ on $m$ vertices a finite $CW$-complex $\mathcal{Z}_K$, called the \emph{moment-angle complex}, which comes equipped with a natural action of the $m$-torus $T^m=(S^1)^m$. Various homological invariants of Stanley--Reisner rings of basic importance in combinatorial commutative algebra are given geometric realizations by $\mathcal{Z}_K$ and related spaces. For example, the homotopy orbit space of $\mathcal{Z}_K$ is the \emph{Davis--Januszkiewicz space} whose cohomology (with coefficients in a commutative ring $\mathbf{k}$) is the Stanley--Reisner ring $\mathbf{k}[K]$ itself, while the ordinary cohomology of $\mathcal{Z}_K$ recovers its Koszul homology (cf. \cite{BBP}, \cite{F}):
\[ H^*(\mathcal{Z}_K; \mathbf{k}) \cong \mathrm{Tor}_{\mathbf{k}[v_1,\ldots,v_m]}^\ast(\mathbf{k}[K],\mathbf{k}). \]

Combinatorial properties of simplicial complexes and, in particular, homological properties of their Stanley--Reisner rings can therefore be studied by investigating the homotopy types of moment-angle complexes. This point of view has recently been useful in establishing the Golod property for $\mathbf{k}[K]$ for certain families of simplicial complexes by applying homotopy theoretic methods to show that the corresponding moment-angle complex $\mathcal{Z}_K$ is homotopy equivalent to a wedge of spheres (see e.g. \cite{GPTW}, \cite{GT}, \cite{IK}, \cite{IK2}). Here, $\mathbf{k}[K]$ is \emph{Golod} if all products and higher Massey products vanish in $\mathrm{Tor}_{\mathbf{k}[v_1,\ldots,v_m]}^\ast(\mathbf{k}[K],\mathbf{k})$. (Golodness for a graded or local ring implies that its Poincar\'e series is a rational function, and an equivalent definition can be given in terms of a certain equality of formal power series; cf. \cite{G}.) A simplicial complex $K$ is called \emph{Golod} if $\mathbf{k}[K]$ is Golod for every field $\mathbf{k}$.

Berglund and J\"ollenbeck observed in \cite{BJ} that the Golod property is stable under deletion of vertices and introduced the notion of a \emph{minimally non-Golod} complex, that is, a non-Golod  simplicial complex which becomes Golod after deleting any of its vertices. Using combinatorial arguments, they showed that the boundary complexes of stacked polytopes are minimally non-Golod. Further examples have been given by Limonchenko \cite{L}, who showed that the nerve complexes of even dimensional dual neighbourly polytopes and certain generalized truncation polytopes are minimally non-Golod. In each of these cases, the corresponding moment-angle complex $\mathcal{Z}_K$ is well known to be a smooth manifold diffeomorphic to a connected sum of sphere products with two spheres in each product (see \cite{BM} and \cite{GL}). Moreover, in \cite{GPTW} it was shown that if $K$ is a flag complex, then $K$ being minimally non-Golod is equivalent to the condition that $\mathcal{Z}_K$ is a connected sum of two-fold products of spheres. The authors raised the question of whether, more generally, $K$ is minimally non-Golod for all simplicial complexes for which $\mathcal{Z}_K$ has such a diffeomorphism type (\cite[Question 3.5]{GPTW}). The purpose of the present paper is to answer this question affirmatively.

\begin{theorem} \label{homeomorphic}
If $\mathcal{Z}_K$ is homeomorphic to a connected sum of sphere products with two spheres in each product, then $K$ is minimally non-Golod.
\end{theorem}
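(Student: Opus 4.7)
The plan is to derive Theorem~\ref{homeomorphic} from the stronger, homotopy-level structural result announced in the abstract: if $\mathcal{Z}_K$ is homotopy equivalent to a connected sum of sphere products with two spheres in each product, then $K \cong \Delta^n * L$ for some $n \geq -1$ with $L$ minimally non-Golod. Since homeomorphism is strictly stronger than homotopy equivalence, the hypothesis of Theorem~\ref{homeomorphic} yields such a decomposition, and the task reduces to ruling out a nontrivial simplex factor, i.e., to proving $n = -1$ so that $K = L$ is itself minimally non-Golod.

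To execute this, I would invoke the standard facts $\mathcal{Z}_{K_1 * K_2} \cong \mathcal{Z}_{K_1} \times \mathcal{Z}_{K_2}$ and $\mathcal{Z}_{\Delta^n} \cong D^{2(n+1)}$, which combine to give
\[ \mathcal{Z}_K \cong D^{2(n+1)} \times \mathcal{Z}_L. \]
A connected sum of sphere products is a closed manifold, so the right-hand side must be one as well. The key step is to argue that this is impossible for $n \geq 0$. The cleanest route is via the manifold recognition criterion for moment-angle complexes: $\mathcal{Z}_K$ is a closed topological manifold only if $K$ is a generalized homology sphere, and in particular $K$ must have the integral homology of $S^{\dim K}$. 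However, when $n \geq 0$, the join $K = \Delta^n * L$ is a cone on any vertex of $\Delta^n$, hence contractible, so its reduced homology vanishes. Since $\dim K \geq n \geq 0$, this contradicts the homology sphere condition, forcing $n = -1$ and $K = L$.

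The main difficulty lies not in this short reduction but in the preceding join decomposition theorem itself. Granted that result together with the manifold recognition criterion for $\mathcal{Z}_K$, Theorem~\ref{homeomorphic} follows essentially formally from the identification $\mathcal{Z}_K \cong D^{2(n+1)} \times \mathcal{Z}_L$ above.
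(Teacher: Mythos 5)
Your proposal is correct, but it is not the argument the paper actually displays for Theorem~\ref{homeomorphic}; it is the alternative route the paper only alludes to when it says the theorem ``comes as a corollary of the slightly more general Theorem~\ref{homotopy equivalent}.'' The paper's written proof is direct and self-contained: it uses the closed-manifold hypothesis to observe that the inclusion $j\colon \mathcal{Z}_{K-\{i\}} \to \mathcal{Z}_K$ misses a point (any $(z_1,\ldots,z_m)$ with $|z_i|<1$), hence lifts up to homotopy through the $(n-1)$-skeleton $\bigvee_{k}(S^{n_k}\vee S^{n-n_k})$ of the connected sum; combined with the retraction from Proposition~\ref{retract} and Lemma~\ref{wedge of spheres lemma}, this exhibits $\mathcal{Z}_{K-\{i\}}$ as a homotopy retract of a wedge of spheres, so every deletion $K-\{i\}$ is Golod. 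Your route instead front-loads all of the work into Theorem~\ref{homotopy equivalent} (whose proof requires Lemma~\ref{key lemma}, Proposition~\ref{TFAE} and the Poincar\'e duality/Gorenstein discussion) and then kills the simplex factor. That finishing step is sound: there is no circularity, since the proof of Theorem~\ref{homotopy equivalent} reuses an \emph{argument} from the proof of Theorem~\ref{homeomorphic} rather than its statement, and your appeal to Cai's criterion that a closed moment-angle manifold forces $K$ to be a generalized homology sphere (hence non-contractible, ruling out a cone) is exactly what the paper's remark following Theorem~\ref{homotopy equivalent} supplies; alternatively one can argue, as the paper does in the corollary to Theorem~\ref{real}, that $\mathcal{Z}_K \cong D^{2(d+1)}\times\mathcal{Z}_L$ cannot be a closed manifold unless $d=-1$. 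The trade-off: the paper's direct proof is more economical, needing essentially nothing from Section~2 beyond Proposition~\ref{retract}, while your reduction yields the extra conclusion that $K$ is Gorenstein* at no additional cost.
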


\begin{remark}
The statement of Theorem \ref{homeomorphic} is not true if the ``homeomorphic" condition is replaced by ``homotopy equivalent". In Section 3 we give a counterexample in the form of a cone over a minimally non-Golod complex $K$ for which $\mathcal{Z}_K$ is homotopy equivalent to a connected sum of sphere products (see Example \ref{example}), and we prove that iterated cones of this type are the only such counterexamples. We also remark that the converse of Theorem \ref{homeomorphic} is not true. In \cite{L2}, stellar subdivisions of minimal triangulations of $T^2$ and $\mathbb{C}P^2$ are shown to be minimally non-Golod complexes whose corresponding moment-angle complexes are not connected sums of sphere products.
\end{remark}

We give two proofs of Theorem \ref{homeomorphic}. The first is a direct proof that makes crucial use of the assumption that the moment-angle complex $\mathcal{Z}_K$ has the structure of a closed manifold. The second comes as a corollary of the slightly more general Theorem \ref{homotopy equivalent} below.

For $n\ge-1$, let $\Delta^n$ be the standard $n$-simplex, where $\Delta^{-1}=\varnothing$ is the empty simplicial complex.

\begin{theorem} \label{homotopy equivalent}
If $\mathcal{Z}_K$ is homotopy equivalent to a connected sum of sphere products with two spheres in each product, then $K = \Delta^n \ast L$ for some $n\ge -1$ where $L$ is Gorenstein* and minimally non-Golod. 
\end{theorem}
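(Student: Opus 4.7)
The plan is to first decompose $K$ as a join with a simplex and then verify the Gorenstein* and minimally non-Golod properties of the remaining factor. Let $\Delta^n$ denote the simplex on the cone vertices of $K$ (those contained in every maximal simplex) and $L$ the induced subcomplex on the remaining vertices, so that $K = \Delta^n \ast L$ with $L$ having no cone vertex. Since $\mathcal{Z}_K = \mathcal{Z}_{\Delta^n} \times \mathcal{Z}_L \simeq \mathcal{Z}_L$, the space $\mathcal{Z}_L$ is itself homotopy equivalent to a connected sum of sphere products, hence a simply-connected Poincar\'e duality space. By the characterization of Gorenstein simplicial complexes in terms of Poincar\'e duality of $\mathcal{Z}_K$, it follows that $K$ is Gorenstein, and since $L$ has no cone vertex, $L$ is Gorenstein*.

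For non-Golodness of $L$, the cup products $\alpha_i\beta_i\neq 0$ in $H^*(\mathcal{Z}_L)\cong H^*(\#_i(S^{p_i}\times S^{q_i}))$ transfer under the Baskakov--Buchstaber--Panov isomorphism to non-vanishing products in $\mathrm{Tor}^*_{\mathbf{k}[v_1,\ldots,v_m]}(\mathbf{k}[L],\mathbf{k})$. For minimality, fix a vertex $v$ of $L$. Hochster's formula gives $H^*(\mathcal{Z}_L)\cong\bigoplus_{I\subseteq V(L)}\tilde{H}^{*-|I|-1}(L_I)$, and Baskakov's combinatorial description of the cup product pairs summands indexed by disjoint $I,J$ into $I\cup J$, vanishing if $I\cap J\neq\varnothing$. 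Poincar\'e duality together with the Gorenstein* property forces the top-dimensional cohomology of $\mathcal{Z}_L$ to concentrate in the $I=V(L)$ summand $\tilde{H}^{\dim L}(L_{V(L)})$; combined with the fact that every non-trivial cup product in $H^*(\#_i(S^{p_i}\times S^{q_i}))$ hits the top class, this forces every non-trivial cup product in $H^*(\mathcal{Z}_L)$ to satisfy $I\cup J=V(L)$. Since $(L\setminus v)_I=L_I$ for $I\subseteq V(L)\setminus\{v\}$, the cup product in $H^*(\mathcal{Z}_{L\setminus v})$ is simply the restriction of that in $H^*(\mathcal{Z}_L)$ to such index sets, for which $I\cup J\subsetneq V(L)$; hence all cup products in $H^*(\mathcal{Z}_{L\setminus v})$ vanish.

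The main obstacle is promoting this vanishing of cup products to Golodness of $L\setminus v$, which also requires the vanishing of all higher Massey products. The cleanest strategy is to show directly that $\mathcal{Z}_{L\setminus v}$ is homotopy equivalent to a wedge of spheres, which forces both cup and Massey products to vanish. For this I would exploit the Mayer--Vietoris decomposition
\[\mathcal{Z}_L=(\mathcal{Z}_{L\setminus v}\times S^1)\cup_{\mathcal{Z}_{\mathrm{lk}(v)}\times S^1}(\mathcal{Z}_{\mathrm{lk}(v)}\times D^2),\]
which presents $\mathcal{Z}_{L\setminus v}\times S^1$ as the complement of a tubular neighborhood of the codimension-two submanifold $\mathcal{Z}_{\mathrm{lk}(v)}\subset\mathcal{Z}_L$, together with an induction on $|V(L)|$ using that $\mathrm{lk}_L(v)$ is Gorenstein* on fewer vertices. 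The delicate step is controlling the homotopy type of $\mathcal{Z}_{\mathrm{lk}(v)}$ well enough to carry out this identification, since $\mathcal{Z}_{\mathrm{lk}(v)}$ need not a priori be a connected sum of sphere products; resolving this subtlety, rather than the cup product argument in the previous paragraph, is where the bulk of the work lies.
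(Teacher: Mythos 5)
Your decomposition $K=\Delta^n\ast L$ into the simplex on the cone vertices and its complementary full subcomplex, the identification of $L$ as Gorenstein* via Poincar\'e duality of $H^*(\mathcal{Z}_L)$, and the non-Golodness of $L$ from the nontrivial cup product all match the paper. The Hochster/Baskakov computation showing that all cup products in $H^*(\mathcal{Z}_{L-\{v\}})$ vanish is a correct (and not unreasonable) observation, but, as you say yourself, it does not give Golodness: the higher Massey products remain. The entire weight of the theorem therefore rests on the step you leave open, namely showing that $\mathcal{Z}_{L-\{v\}}$ is homotopy equivalent to a wedge of spheres, and the Mayer--Vietoris-plus-induction strategy you sketch is not carried out and runs into exactly the obstruction you name (no control over $\mathcal{Z}_{\mathrm{link}_L(v)}$, which need not be a connected sum of sphere products). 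As written, this is a genuine gap.

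The paper closes this gap by a much more direct route that needs no analysis of links. Since $v$ is not a cone vertex, the inclusion $j\colon\mathcal{Z}_{L-\{v\}}\to\mathcal{Z}_L$ is not a homotopy equivalence (Lemma~\ref{key lemma}); on the other hand $j^*$ is split surjective on $H^n(\mathcal{Z}_L)\cong\mathbb{Z}$ because of the retraction of Proposition~\ref{retract}, and if $j^*$ were an isomorphism on $H^n$ then nondegeneracy of the Poincar\'e pairing would force $j^*$ to be an isomorphism in every degree, making $j$ a homotopy equivalence. Hence $H^n(\mathcal{Z}_{L-\{v\}})=0$, so $\mathcal{Z}_{L-\{v\}}$ has the homotopy type of a simply-connected complex of dimension less than $n$ and $j$ lifts, up to homotopy, through the $(n-1)$-skeleton of $\#_{k}(S^{n_k}\times S^{n-n_k})$, which is the wedge $\bigvee_k(S^{n_k}\vee S^{n-n_k})$. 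Together with the retraction $r\colon\mathcal{Z}_L\to\mathcal{Z}_{L-\{v\}}$ this exhibits $\mathcal{Z}_{L-\{v\}}$ as a homotopy retract of a simply-connected wedge of spheres, hence itself a wedge of spheres by Lemma~\ref{wedge of spheres lemma}, and therefore $L-\{v\}$ is Golod. Replacing your Mayer--Vietoris paragraph with this retraction-plus-skeleton argument (which also renders the cup-product paragraph unnecessary) completes the proof.
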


Let $K$ be a simplicial complex on the vertex set $[m]$. The \emph{star} of a vertex $v \in K$ is the subcomplex \[ \mathrm{star}_K(v) = \{ \sigma \in K \mid \{v\} \cup \sigma \in K \}. \]
The \emph{core} of $K$ is then defined to be the full subcomplex \[ \mathrm{core}(K) = K_{\{v\in [m] \,\mid\, \mathrm{star}_K(v) \neq K\}} \] of $K$ on the restricted vertex set $\{v\in [m] \mid \mathrm{star}_K(v) \neq K\}$. Note that any simplicial complex can be written as a join 
\begin{equation} \label{join}
K=\Delta^n \ast \mathrm{core}(K),
\end{equation} 
where $\Delta^n$ is the simplex with (possibly empty) vertex set $[m] \backslash \mathrm{core}(K) = \{v\in [m] \mid \mathrm{star}_K(v) = K\}$.
Since the moment-angle complex functor carries simplicial joins to Cartesian products and $\mathcal{Z}_{\Delta^n} \cong D^{2(n+1)}$ is contractible, it follows from \eqref{join} that the homotopy type of a moment-angle complex $\mathcal{Z}_K$ is determined by the core of $K$. In the notation of Theorem~\ref{homotopy equivalent}, it will be shown that $L=\mathrm{core}(K)$ and hence that any simplicial complex satisfying the hypothesis of Theorem~\ref{homotopy equivalent} has a minimally non-Golod core.
The Gorenstein* property implies that $\mathcal{Z}_{\mathrm{core}(K)}$ is a closed orientable manifold. 


The author would like to thank the Fields Institute for Research in Mathematical Sciences and the organizers of the Thematic Program on Polyhedral Products and Toric Topology for providing both an excellent setting for research and an opportunity to present this work. In particular, comments and questions from Taras Panov, Don Stanley and Stephen Theriault led to improvements in this paper.

\section{Preliminaries}

Throughout this paper, $K$ will denote a finite abstract simplicial complex on the vertex set $[m]=\{1,\ldots,m\}$. We always assume that $\varnothing \in K$ and that $K$ has no ghost vertices, that is, $\{i\}\in K$ for all $i=1,\ldots,m$.

Let $(\underline{X},\underline{A}) = \{(X_i,A_i)\}_{i=1}^m$ be a sequence of pointed $CW$-pairs. For each simplex $\sigma \in K$, define $(\underline{X},\underline{A})^\sigma$ to be the subspace of $\prod_{i=1}^m X_i$ given by
\[ (\underline{X},\underline{A})^\sigma = \{(x_1,\ldots,x_m) \in \textstyle\prod_{i=1}^m X_i \mid x_i \in A_i \text{ for } i \notin \sigma\}. \]
The \emph{polyhedral product} of $(\underline{X},\underline{A})$ corresponding to $K$ is then defined by
\begin{equation} \label{pp definition}
(\underline{X},\underline{A})^K = \bigcup_{\sigma\in K} (\underline{X},\underline{A})^\sigma  \subseteq \prod_{i=1}^m X_i. 
\end{equation}

In the case where $(X_i,A_i)=(D^2,S^1)$ for each $i=1,\ldots,m$, the polyhedral product corresponding to $K$ is called the \emph{moment-angle complex}, denoted $\mathcal{Z}_K$. Similarly, the \emph{real moment-angle complex} $\mathcal{R}_K$ is defined by the polyhedral product $(\underline{X},\underline{A})^K$ with $(X_i,A_i)=(D^1,S^0)$ for each $i=1,\ldots,m$. Generalizing these two cases of special interest, much of the work to date on the homotopy theory of polyhedral products has focused on pairs of the form $(CX_i,X_i)$, where $CX_i$ is the reduced cone on $X_i$. For a sequence of spaces $\underline{X}=\{X_i\}_{i=1}^m$, let $C\underline{X}=\{CX_i\}_{i=1}^m$.

For $I\subseteq [m]$, the \emph{full subcomplex} of $K$ on the vertex set $I$ is defined by \[K_I=\{\sigma\in K \mid \sigma\subseteq I\}. \]
For any vertex $i\in [m]$, we denote by $K-\{i\}$ the \emph{deletion complex} of $i$ defined by
\[ K-\{i\} = \{\sigma \in K \mid i \notin \sigma\}. \] 
Note that $K-\{i\}$ is the full subcomplex of $K$ on the restricted vertex set $[m]\backslash \{i\}$. We will need the following basic but useful property of polyhedral products associated to full subcomplexes.

\begin{proposition} \label{retract}
Let $K$ be a simplicial complex on the vertex set $[m]$ and let $I \subseteq [m]$ be a non-empty subset. Then $(\underline{X},\underline{A})^{K_I}$ is a retract of $(\underline{X},\underline{A})^K$.
\end{proposition}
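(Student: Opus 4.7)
The plan is to build the retraction explicitly from a projection map on the ambient product, and verify that it interacts correctly with the polyhedral product subspace.

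First I would set up notation. Since each pair $(X_i, A_i)$ is pointed, fix basepoints $\ast \in A_i \subseteq X_i$. Using these basepoints, define an inclusion $\iota \colon (\underline{X},\underline{A})^{K_I} \hookrightarrow (\underline{X},\underline{A})^K$ by sending a point $(x_i)_{i\in I}$ to the tuple $(y_j)_{j=1}^m$ in $\prod_{j=1}^m X_j$ with $y_j = x_j$ for $j\in I$ and $y_j = \ast$ for $j\notin I$. To see this lands in $(\underline{X},\underline{A})^K$, observe that if $(x_i)_{i\in I} \in (\underline{X},\underline{A})^\tau$ for some $\tau \in K_I$, then $\tau \in K$ as well, and the tuple $(y_j)$ satisfies $y_j\in A_j$ for every $j\notin \tau$ (for $j\in I\setminus\tau$ by the hypothesis, and for $j\notin I$ because $y_j = \ast \in A_j$).

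Next I would define the retraction $r \colon (\underline{X},\underline{A})^K \to (\underline{X},\underline{A})^{K_I}$ as the restriction of the projection $\prod_{j=1}^m X_j \to \prod_{i\in I} X_i$ forgetting coordinates outside $I$. The main step is to check that $r$ does indeed land in $(\underline{X},\underline{A})^{K_I}$. If $(x_1,\ldots,x_m) \in (\underline{X},\underline{A})^\sigma$ for some $\sigma\in K$, then setting $\tau = \sigma\cap I$ we have $\tau \subseteq \sigma \in K$ and $\tau\subseteq I$, so $\tau \in K_I$. For $i\in I\setminus\tau$, the condition $i\notin\sigma$ forces $x_i \in A_i$, so $(x_i)_{i\in I}\in (\underline{X},\underline{A})^\tau \subseteq (\underline{X},\underline{A})^{K_I}$. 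Continuity of $r$ is immediate since it is a restriction of a coordinate projection.

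Finally, $r\circ \iota = \mathrm{id}$ holds by construction: the basepoints inserted by $\iota$ are precisely the coordinates discarded by $r$, leaving the original point $(x_i)_{i\in I}$ unchanged. This exhibits $(\underline{X},\underline{A})^{K_I}$ as a retract of $(\underline{X},\underline{A})^K$.

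I do not anticipate a serious obstacle here; the content is entirely bookkeeping around the definitions. The one point that requires attention is the compatibility check $\sigma \cap I \in K_I$ together with the basepoint-membership condition in the remaining coordinates, which is what makes $r$ well defined on the polyhedral product and not merely on the ambient Cartesian product.
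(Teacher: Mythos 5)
Your proof is correct and follows exactly the same route as the paper's: the basepoint inclusion induced by $K_I \subseteq K$ together with the coordinate projection, whose composite is the identity. The paper leaves the well-definedness of the projection as ``straightforward to check''; you have simply carried out that check (via $\tau = \sigma \cap I$), and your verification is accurate.
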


\begin{proof}
Let $I=\{i_1,\ldots,i_k\} \subseteq [m]$ where $1\le i_1\le\cdots\le i_k\le m$ and $k\ge 1$. The simplicial inclusion $K_I \longrightarrow K$ induces a map of polyhedral products $j_I\colon (\underline{X},\underline{A})^{K_I} \longrightarrow (\underline{X},\underline{A})^K$. It is straightforward to check that the projection $ \prod_{j=1}^m X_j \longrightarrow \prod_{j=1}^k X_{i_j}$ restricts to a map $r\colon (\underline{X},\underline{A})^K \longrightarrow (\underline{X},\underline{A})^{K_I}$ and that the composite 
\[ (\underline{X},\underline{A})^{K_I} \stackrel{j_I}{\longrightarrow} (\underline{X},\underline{A})^K \stackrel{r}{\longrightarrow} (\underline{X},\underline{A})^{K_I} \]
is the identity map.
\end{proof}

Let $\widehat{(\underline{X},\underline{A})}^K$ denote the image of $(\underline{X},\underline{A})^K$ under the natural quotient map $\prod_{i=1}^mX_i \longrightarrow \bigwedge_{i=1}^mX_i$. After suspending, the retraction maps of Proposition \ref{retract} for all full subcomplexes of $K$ can be added together using the co-$H$-space structure on $\Sigma(\underline{X}, \underline{A})^K$ to obtain the following splitting due to Bahri, Bendersky, Cohen and Gitler.

\begin{theorem}[{\cite[Theorem~2.10]{BBCG}}] 
Let $K$ be a simplicial complex on the vertex set $[m]$ and let $(\underline{X},\underline{A}) = \{(X_i,A_i)\}_{i=1}^m$ be a sequence of pointed $CW$-pairs. Then there is a natural homotopy equivalence
\[ \Sigma(\underline{X}, \underline{A})^K \simeq \bigvee_{I\subseteq [m]} \Sigma \widehat{(\underline{X},\underline{A})}^{K_I}. \]
\end{theorem}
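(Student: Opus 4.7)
My plan is to construct the splitting map explicitly using Proposition~\ref{retract} and then verify that it is a homotopy equivalence by comparison with the classical James-type suspension splitting of a Cartesian product. For each nonempty $I \subseteq [m]$, Proposition~\ref{retract} gives a retraction $r_I\colon (\underline{X},\underline{A})^K \to (\underline{X},\underline{A})^{K_I}$. Composing with the smash quotient $q_I\colon (\underline{X},\underline{A})^{K_I} \to \widehat{(\underline{X},\underline{A})}^{K_I}$ and suspending, I obtain maps $\Sigma(q_I \circ r_I)$. Since $\Sigma(\underline{X},\underline{A})^K$ is a co-$H$-space, these maps can be summed into a single map
\[ F\colon \Sigma(\underline{X},\underline{A})^K \longrightarrow \bigvee_{\varnothing \neq I \subseteq [m]} \Sigma \widehat{(\underline{X},\underline{A})}^{K_I}, \]
which is the candidate for the desired equivalence; the $I = \varnothing$ summand is a point and may be suppressed.

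To verify $F$ is a homotopy equivalence, I would compare it with the classical suspension splitting of a product. Iterated application of the splitting $\Sigma(X \times Y) \simeq \Sigma X \vee \Sigma Y \vee \Sigma(X \wedge Y)$ gives a natural homotopy equivalence
\[ \Sigma \textstyle\prod_{i=1}^m X_i \simeq \bigvee_{\varnothing \neq I \subseteq [m]} \Sigma \widehat{X}^I, \quad \widehat{X}^I = \bigwedge_{i \in I} X_i, \]
realized as the sum, after suspending, of the composites $\prod_i X_i \to \prod_{i \in I} X_i \to \widehat{X}^I$ of coordinate projection with smash quotient. The retraction $r_I$ from Proposition~\ref{retract} is by construction the restriction to $(\underline{X},\underline{A})^K$ of the projection $\prod_i X_i \to \prod_{i \in I} X_i$, so naturality forces restricting the classical splitting along $(\underline{X},\underline{A})^K \hookrightarrow \prod_i X_i$ to yield precisely the map $F$, provided one identifies the image of $(\underline{X},\underline{A})^K$ in the $I$-th smash summand as $\widehat{(\underline{X},\underline{A})}^{K_I}$.

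The main obstacle, which is the technical core of the argument, is this identification of summands together with showing that the summed map $F$ is a wedge-wise equivalence rather than merely a map inducing the correct map on each summand. Concretely, a point $(x_1,\ldots,x_m) \in (\underline{X},\underline{A})^K$ projects nontrivially into $\widehat{X}^I$ exactly when $x_i \notin A_i$ for all $i \in I$, which happens exactly when the set of such $i \in I$ is a simplex of $K_I$; this yields the set-theoretic identification of the image. Combining this identification with the naturality of the classical splitting shows that $F$ fits into a commutative diagram whose other edges are homotopy equivalences, and hence $F$ itself is a homotopy equivalence, which is the assertion of the theorem.
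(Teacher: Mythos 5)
The paper does not prove this statement; it is quoted from Bahri--Bendersky--Cohen--Gitler, and the one sentence preceding it only describes the candidate map, namely the sum over $I$ of the suspended composites $\Sigma(q_I\circ r_I)$. Your construction of $F$ agrees with that description, and your identification of the image of $(\underline{X},\underline{A})^{K}$ under projection-then-smash as $\widehat{(\underline{X},\underline{A})}^{K_I}$ is correct. The gap is in the verification. The commutative square you invoke has $\Sigma(\underline{X},\underline{A})^K \to \bigvee_I \Sigma\widehat{(\underline{X},\underline{A})}^{K_I}$ on top, the classical James--Milnor splitting $\Sigma\prod_i X_i \simeq \bigvee_I \Sigma\widehat{X}^I$ on the bottom, and the inclusions $\Sigma(\underline{X},\underline{A})^K \to \Sigma\prod_i X_i$ and $\bigvee_I\Sigma\widehat{(\underline{X},\underline{A})}^{K_I} \to \bigvee_I\Sigma\widehat{X}^I$ as vertical edges. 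These vertical edges are \emph{not} homotopy equivalences in general, so the two-out-of-three conclusion fails. Take $K=\partial\Delta^1$ (two disjoint vertices) and $(X_i,A_i)=(D^2,S^1)$: then $(\underline{X},\underline{A})^K=\mathcal{Z}_{\partial\Delta^1}\cong S^3$ while $\prod_i X_i = D^2\times D^2$ is contractible, and for $I=\{1,2\}$ one has $\widehat{(\underline{X},\underline{A})}^{K_I}\simeq S^3$ sitting inside the contractible space $D^2\wedge D^2$. Restricting the (trivial) splitting of $\Sigma(D^2\times D^2)$ says nothing about $\Sigma S^3$.

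The honest content of the theorem is precisely the step you defer to naturality: showing that the sum of these retraction-quotient maps induces an isomorphism on homology (equivalently, that $\widetilde{H}_*((\underline{X},\underline{A})^K)\cong\bigoplus_I \widetilde{H}_*(\widehat{(\underline{X},\underline{A})}^{K_I})$, a polyhedral-product generalization of Hochster's formula), after which Whitehead's theorem applies since every suspension is simply connected. BBCG establish this by induction on the number of vertices, decomposing the polyhedral product as a pushout over $K = K_1\cup_{K_{12}}K_2$ and using that suspension carries such pushouts to splittable cofibrations; some argument of this kind (or an independent homology computation) is needed, and comparison with the ambient product $\prod_i X_i$ cannot supply it.
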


The authors of \cite{BBCG} go on to further identify the spaces appearing on the right hand side of the wedge decomposition above in various cases of interest.

\begin{theorem}[{\cite{BBCG}}] \label{BBCG splitting}
Let $K$ be a simplicial complex on the vertex set $[m]$ and let $\underline{X}=\{X_i\}_{i=1}^m$ be a sequence of pointed $CW$-complexes. Then there is a homotopy equivalence
\[
\Sigma(C\underline{X},\underline{X})^K \simeq \bigvee_{I\notin K} \Sigma^2|K_I| \wedge \widehat{X}^I 
\]
where $\widehat{X}^I = X_{i_1} \wedge \cdots \wedge X_{i_k}$ for $I=\{i_1,\ldots,i_k\}$.
\end{theorem}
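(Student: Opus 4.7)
The plan is to derive the splitting from the general BBCG suspension splitting (the theorem quoted just above) applied to the sequence of cone pairs $\{(CX_i,X_i)\}_{i=1}^m$. That result immediately yields
\[ \Sigma(C\underline{X},\underline{X})^K \simeq \bigvee_{I\subseteq[m]} \Sigma\widehat{(C\underline{X},\underline{X})}^{K_I}, \]
so what remains is to identify each smash polyhedral product, showing that it is contractible when $I\in K$ and is homotopy equivalent to $\Sigma|K_I|\wedge\widehat{X}^I$ when $I\notin K$. A useful preliminary observation is that any coordinate $j\notin I$ is forced to lie in $X_j\subseteq CX_j$ (the basepoint of $CX_j$) and therefore collapses under the smash quotient; this reduces the computation of $\widehat{(C\underline{X},\underline{X})}^{K_I}$ to that of $\widehat{(CX_I,X_I)}^{K_I}$, where the sequence of pairs and the vertex set are both restricted to $I$.

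The case $I\in K$ is immediate: then $K_I$ is the full simplex on $I$, so $(CX_I,X_I)^{K_I}=\prod_{i\in I}CX_i$ is contractible and the smash quotient sends all of it to the basepoint. This accounts for the restriction of the stated wedge to indices $I\notin K$. For the substantive case $I\notin K$, my plan is to prove $\widehat{(CX_I,X_I)}^{K_I}\simeq\Sigma|K_I|\wedge\widehat{X}^I$ by induction on the number of maximal simplices of $K_I$. The base case is $K_I=\partial\Delta^{|I|-1}$, where the classical iterated-join identification $(CX_I,X_I)^{\partial\Delta^{|I|-1}}\simeq X_{i_1}\ast\cdots\ast X_{i_k}\simeq\Sigma^{k-1}\widehat{X}^I$ (with $k=|I|$) agrees with $\Sigma|\partial\Delta^{k-1}|\wedge\widehat{X}^I=\Sigma S^{k-2}\wedge\widehat{X}^I$. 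For the inductive step, removing a maximal face $\tau$ presents $|K_I|$ as the topological pushout $|K_I-\{\tau\}|\cup_{|\partial\tau|}|\tau|$ on the simplicial side and
\[ (CX_I,X_I)^{K_I-\{\tau\}}\cup_{(CX_I,X_I)^{\partial\tau}}(CX_I,X_I)^{\tau}\simeq(CX_I,X_I)^{K_I} \]
on the polyhedral side; after the smash quotient, the corner $(CX_I,X_I)^{\tau}$ collapses (it contains the contractible product $\prod_{i\in\tau}CX_i$), so the resulting smash pushout mirrors the simplicial pushout for $|K_I|$ smashed with $\widehat{X}^I$, with the extra suspension accounted for by the collapsed cone directions.

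The main obstacle will be making the passage from polyhedral to smash pushouts rigorous, since the smash quotient does not in general preserve arbitrary pushouts. One must verify that the inclusion $(CX_I,X_I)^{\partial\tau}\hookrightarrow(CX_I,X_I)^{\tau}$ is a cofibration whose smash cofiber can be controlled, and keep careful track of where the extra suspension factor originates. A cleaner but more machinery-intensive alternative would be to exhibit $\widehat{(CX_I,X_I)}^{K_I}$ directly as the reduced realization of a pointed simplicial space whose structure separates the simplicial factor $|K_I|$ from the smash factor $\widehat{X}^I$, bypassing the inductive bookkeeping altogether.
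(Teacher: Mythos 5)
The paper offers no proof of this statement --- it is quoted directly from \cite{BBCG} --- so the only question is whether your sketch would itself constitute a proof. Your overall strategy (apply the general suspension splitting $\Sigma(C\underline{X},\underline{X})^K\simeq\bigvee_{I\subseteq[m]}\Sigma\widehat{(C\underline{X},\underline{X})}^{K_I}$ and then identify each summand with $\Sigma(|K_I|\ast\widehat{X}^I)\simeq\Sigma^2|K_I|\wedge\widehat{X}^I$) is the right one and is how the result is obtained in \cite{BBCG}. But the execution has genuine gaps. First, your induction is not well-founded: you induct on the number of maximal simplices of $K_I$ with base case $K_I=\partial\Delta^{|I|-1}$, yet $\partial\Delta^{|I|-1}$ has $|I|$ maximal faces and is far from the minimal complex on $I$ (e.g.\ $|I|$ disjoint vertices, or an edge together with an isolated vertex, are never reached from it), and deleting a maximal face $\tau$ need not decrease the number of maximal faces, since facets of $\tau$ can become maximal. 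You would need to induct on, say, the total number of simplices, supply an honest base case, and handle the fact that the corner $\partial\tau$ lives on the vertex set $I$ with ghost vertices outside $\tau$, so the inductive hypothesis does not apply to it verbatim.

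Second, the step you defer as ``the main obstacle'' is the actual mathematical content. Pushing the pushout through the smash quotient can be made rigorous, and since the corner $\widehat{(CX_I,X_I)}^{\langle\tau\rangle}$ is contractible the homotopy pushout is the mapping cone of $\widehat{(CX_I,X_I)}^{\partial\tau}\to\widehat{(CX_I,X_I)}^{K_I-\{\tau\}}$; but to conclude that this cone is $\Sigma|K_I|\wedge\widehat{X}^I$ you must identify that map, under the inductive equivalences, with $\Sigma\iota\wedge\mathrm{id}_{\widehat{X}^I}$ for the simplicial inclusion $\iota\colon|\partial\tau|\to|K_I-\{\tau\}|$. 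This requires the equivalences to be natural in the subcomplex, and nothing in your sketch provides that. Two smaller errors: $X_j$ is a subspace of $CX_j$ containing the basepoint, not the basepoint itself, so coordinates outside $I$ do not ``collapse'' --- rather, the smash in the BBCG splitting is by definition taken only over the coordinates in $I$; and Porter's identification $(CX_I,X_I)^{\partial\Delta^{|I|-1}}\simeq X_{i_1}\ast\cdots\ast X_{i_k}$ concerns the polyhedral product itself, whereas your base case needs its smash image $\widehat{(CX_I,X_I)}^{\partial\Delta^{|I|-1}}$. Finally, be aware that the unsuspended identification $\widehat{(C\underline{X},\underline{X})}^{K}\simeq|K|\ast\widehat{X}$ is genuinely delicate in general (it is what the fat wedge filtration analysis of \cite{IK2} addresses); only the suspended statement is needed here, and working consistently after one suspension --- where homology and Whitehead's theorem are available --- would make your argument considerably easier to close.
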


In the special case where $X_i=S^1$ for all $i=1,\ldots,m$, Theorem \ref{BBCG splitting} gives the following suspension splitting for moment-angle complexes, which can be regarded as a geometric realization of the description of $\mathrm{Tor}_{\mathbf{k}[v_1,\ldots,v_m]}^\ast(\mathbf{k}[K], \mathbf{k})$ given by Hochster's Theorem.

\begin{corollary} There is a homotopy equivalence
\[ \Sigma\mathcal{Z}_K \simeq \bigvee_{I\notin K} \Sigma^{|I|+2}|K_I|. \]
\end{corollary}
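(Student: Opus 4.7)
The corollary is essentially a direct specialization of Theorem \ref{BBCG splitting}, so my plan is to simply plug in $X_i = S^1$ for each $i = 1, \ldots, m$ and unwind the definitions on both sides.

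First I would identify the left hand side. With $X_i = S^1$, the reduced cone $CS^1$ is homeomorphic (as a pointed pair with subspace $S^1$) to $(D^2, S^1)$. Therefore the polyhedral product $(C\underline{X}, \underline{X})^K$ coincides with $(\underline{D^2}, \underline{S^1})^K$, which is by definition the moment-angle complex $\mathcal{Z}_K$. Hence the left hand side of the equivalence in Theorem \ref{BBCG splitting} becomes $\Sigma \mathcal{Z}_K$.

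Next I would identify the right hand side. For any $I = \{i_1, \ldots, i_k\} \subseteq [m]$, the smash product $\widehat{X}^I = X_{i_1} \wedge \cdots \wedge X_{i_k}$ is an iterated smash of copies of $S^1$, so $\widehat{X}^I \cong S^{|I|}$. Consequently
\[ \Sigma^2 |K_I| \wedge \widehat{X}^I \;\simeq\; \Sigma^2 |K_I| \wedge S^{|I|} \;\simeq\; \Sigma^{|I|+2} |K_I|, \]
and summing over $I \notin K$ yields the desired wedge decomposition.

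The only step requiring any care is the verification that $(CS^1, S^1)$ and $(D^2, S^1)$ agree as pairs for the purposes of forming the polyhedral product; this is immediate from the fact that the reduced cone on $S^1$ collapses the basepoint and is homeomorphic to $D^2$ with $S^1$ sitting in it as the boundary (up to pointed homotopy equivalence of pairs, which is sufficient since polyhedral products of CW-pairs are homotopy invariant). There is no real obstacle here beyond bookkeeping, and no Hochster comparison needs to be carried out — the statement about $\mathrm{Tor}$ mentioned in the surrounding text is offered as motivation for viewing the splitting as geometric, not as an ingredient of the proof.
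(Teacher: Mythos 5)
Your proposal is correct and matches the paper, which presents this corollary as an immediate specialization of Theorem \ref{BBCG splitting} to the case $X_i=S^1$ without further argument: one identifies $(C\underline{X},\underline{X})^K$ with $\mathcal{Z}_K$ and uses $\widehat{X}^I\cong S^{|I|}$ so that $\Sigma^2|K_I|\wedge S^{|I|}\simeq\Sigma^{|I|+2}|K_I|$. Your remark that the Hochster/Tor statement is motivation rather than an ingredient is also consistent with the paper.
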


Next, we use the splittings above to prove a lemma which will be needed in the proof of Theorem \ref{homotopy equivalent} to compare the homotopy types of moment-angle complexes associated to a simplicial complex $K$ and its deletion complexes $K-\{i\}$. 

\begin{lemma} \label{key lemma}
Let $K$ be a simplicial complex on the vertex set $[m]$ and let $\underline{X}=\{X_j\}_{j=1}^m$ be a sequence of pointed $CW$-complexes which are non-contractible. Let $i\in [m]$. Then the natural inclusion $(C\underline{X},\underline{X})^{K-\{i\}} \longrightarrow (C\underline{X},\underline{X})^K$ is a homotopy equivalence if and only if $K = \{i\} \ast (K-\{i\})$.
\end{lemma}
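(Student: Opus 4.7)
My plan is to prove the two directions of the equivalence separately. The ``if'' direction follows quickly from the fact that polyhedral products convert simplicial joins to Cartesian products, while the ``only if'' direction will be handled by contrapositive using the BBCG suspension splitting (Theorem~\ref{BBCG splitting}).

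For the ``if'' direction, suppose $K = \{i\} \ast (K-\{i\})$. Since polyhedral products send joins to products, there is a natural homeomorphism $(C\underline{X},\underline{X})^K \cong CX_i \times (C\underline{X},\underline{X})^{K-\{i\}}$ (viewing $K-\{i\}$ on the vertex set $[m]\setminus\{i\}$). As $CX_i$ is contractible, the natural inclusion $(C\underline{X},\underline{X})^{K-\{i\}} \hookrightarrow CX_i \times (C\underline{X},\underline{X})^{K-\{i\}}$ given by $y \mapsto (\ast,y)$ is a homotopy equivalence with homotopy inverse given by projection onto the second factor.

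For the ``only if'' direction, I argue by contrapositive. Suppose $K \neq \{i\} \ast (K-\{i\})$, so there exists a simplex $\sigma \in K - \{i\}$ with $\sigma \cup \{i\} \notin K$. Choose such a $\sigma$ of minimal cardinality; since $\{i\} \in K$ by the no-ghost-vertex convention, $\sigma$ is nonempty. Set $I = \sigma \cup \{i\}$. Minimality of $\sigma$ forces every proper subset of $I$ to lie in $K$, so $K_I = \partial \Delta^I$ and hence $|K_I| \simeq S^{|\sigma|-1}$. Now apply Theorem~\ref{BBCG splitting} to both $K$ and $K-\{i\}$. For $J \subseteq [m]\setminus\{i\}$, one has $(K-\{i\})_J = K_J$ and $J \notin K-\{i\}$ iff $J \notin K$. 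By naturality of the BBCG splitting with respect to the simplicial inclusion $K-\{i\} \hookrightarrow K$, the suspension of the natural inclusion is identified with the inclusion of wedge summands indexed by subsets of $[m]\setminus\{i\}$ into those indexed by subsets of $[m]$. In particular, the summand
\[ \Sigma^2 |K_I| \wedge \widehat{X}^I \simeq \Sigma^{|\sigma|+1} \widehat{X}^I \]
indexed by $I$ appears in $\Sigma (C\underline{X},\underline{X})^K$ but is absent from $\Sigma (C\underline{X},\underline{X})^{K-\{i\}}$.

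The main technical obstacle is then to verify that this extra summand is not contractible, as this will prevent the inclusion of polyhedral products from being a homotopy equivalence. Since each $X_j$ is a non-contractible pointed CW-complex, it has nontrivial reduced homology with coefficients in some field; the K\"unneth theorem then implies that $\widehat{X}^I$, and thus any suspension of it, also has nontrivial reduced homology. Consequently $\Sigma (C\underline{X},\underline{X})^K$ carries strictly more reduced homology than $\Sigma (C\underline{X},\underline{X})^{K-\{i\}}$, so the inclusion cannot be a homotopy equivalence, completing the contrapositive.
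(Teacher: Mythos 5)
Your proposal follows essentially the same route as the paper: the ``if'' direction via the join-to-product homeomorphism, and the ``only if'' direction via the BBCG splitting (Theorem~\ref{BBCG splitting}) together with the key observation that a minimal non-face $I=\sigma\cup\{i\}$ has $K_I=\partial\Delta^{|I|-1}$, so the corresponding wedge summand is $\Sigma^{|\sigma|+1}\widehat{X}^I$. The paper organizes the ``only if'' direction as a direct induction on the cardinality of simplices rather than a contrapositive built on a minimal counterexample, but that difference is purely cosmetic.

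The one step that deserves scrutiny is your justification that the extra summand $\Sigma^{|\sigma|+1}\widehat{X}^I$ is non-contractible; as written it has two problems. First, a non-contractible pointed $CW$-complex need not have nontrivial reduced homology with coefficients in any field: an acyclic complex with nontrivial fundamental group (e.g.\ a presentation complex of a nontrivial acyclic group) is a counterexample. Second, even granting nontrivial field homology for each $X_j$, the K\"unneth argument requires a \emph{common} field: if $X_1=M(\mathbb{Z}/2,2)$ and $X_2=M(\mathbb{Z}/3,2)$ are Moore spaces, each has nontrivial reduced $\mathbb{F}_2$-, respectively $\mathbb{F}_3$-homology, yet $X_1\wedge X_2$ is contractible. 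In fairness, the paper's own proof asserts without justification that a smash product of non-contractible complexes is non-contractible, which fails for the same reason, and indeed the lemma as literally stated fails for such Moore spaces (take $K=\partial\Delta^1$). Both arguments become correct under a hypothesis guaranteeing $\widehat{X}^I\not\simeq\ast$, e.g.\ that all the $X_j$ have nontrivial reduced homology over one fixed field --- which holds in every application in the paper, where the $X_j$ are spheres. So your argument is no less rigorous than the original, but the K\"unneth step needs the fields to be aligned, and you should flag that extra hypothesis explicitly.
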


\begin{proof}
If $K = \{i\} \ast (K - \{i\})$ is the cone over the deletion complex $K-\{i\}$, then permuting coordinates defines a homeomorphism
\[ (C\underline{X},\underline{X})^K \cong CX_i \times (C\underline{X},\underline{X})^{K-\{i\}}, \]
where the sequence of pairs of spaces $(C\underline{X},\underline{X})$ on the right-hand side is understood to be $\{(CX_j,X_j)\}_{j\in [m]\backslash\{i\}}$. The natural inclusion $(C\underline{X},\underline{X})^{K-\{i\}} \longrightarrow (C\underline{X},\underline{X})^K$ composed with the homeomorphism above is the inclusion of the right-hand factor in the product $CX_i \times (C\underline{X},\underline{X})^{K-\{i\}}$, which is a homotopy equivalence since $CX_i$ is contractible.

Conversely, suppose $(C\underline{X},\underline{X})^{K-\{i\}} \longrightarrow (C\underline{X},\underline{X})^K$ is a homotopy equivalence. By Theorem \ref{BBCG splitting}, there is a suspension splitting
\begin{align*}
\Sigma(C\underline{X},\underline{X})^K &\simeq \bigvee_{I\notin K} \Sigma^2|K_I| \wedge \widehat{X}^I \\
&\simeq \Bigg( \bigvee_{\substack{I\notin K \\ i\notin I}} \Sigma^2|K_I| \wedge \widehat{X}^I \Bigg) \vee \Bigg( \bigvee_{\substack{I\notin K \\ i\in I}} \Sigma^2|K_I| \wedge \widehat{X}^I \Bigg) \\
&\simeq \quad \Sigma(C\underline{X},\underline{X})^{K-\{i\}} \hspace{1.05em} \vee \Bigg( \bigvee_{\substack{I\notin K \\ i\in I}} \Sigma^2|K_I| \wedge \widehat{X}^I \Bigg),
\end{align*} 
and, up to homotopy, the suspended inclusion $\Sigma(C\underline{X},\underline{X})^{K-\{i\}} \longrightarrow \Sigma(C\underline{X},\underline{X})^K$ is given by the inclusion of the first wedge summand. Since this is a homotopy equivalence by assumption, it follows that $\Sigma^2|K_I| \wedge \widehat{X}^I$ must be contractible for every non-face $I\notin K$ containing the vertex $i$. As the $CW$-complexes $X_1,\ldots,X_m$ are all non-contractible, so are their smash products $\widehat{X}^I=\bigwedge_{j\in I}X_j$, so this implies in particular that $\Sigma^2|K_I|$ is contractible for every non-face $I\notin K$ containing $i$. 

To show that $K = \{i\} \ast (K-\{i\})$, it suffices to show that $\{i\} \cup \sigma \in K$ whenever $\sigma\in K$. First observe that $\{i,j\}\in K$ for all $j\in [m]$, since otherwise we would have $\Sigma^2|K_{\{i,j\}}|=\Sigma^2S^0\simeq S^2 \not\simeq \ast$, contradicting the conclusion of the previous paragraph. Next, assume inductively that $\{i\} \cup \sigma \in K$ for every simplex $\sigma\in K$ with $|\sigma|=n$. Let $\tau=\{j_1,\ldots,j_{n+1}\} \in K$. Then for each $1\le k\le n+1$, we have $\{j_1,\ldots,\widehat{j_k},\ldots,j_{n+1}\} \in K$, which implies $\{i,j_1,\ldots,\widehat{j_k},\ldots,j_{n+1}\} \in K$. Now every proper subset of $\{i,j_1,\ldots,j_{n+1}\}$ is a simplex of $K$, so if $\{i,j_1,\ldots,j_{n+1}\} \notin K$, then
\[ \Sigma^2|K_{\{i,j_1,\ldots,j_{n+1}\}}| = \Sigma^2\partial\Delta^{n+1} \simeq S^{n+2} \not\simeq \ast, \]
which is a contradiction. Therefore $\{i\}\cup\tau\in K$, which completes the induction. 
\end{proof}

By iterating Lemma \ref{key lemma}, we obtain the following simple combinatorial characterization of when the inclusion of a full subcomplex induces a homotopy equivalence of polyhedral products.

\begin{proposition} \label{TFAE}
Let $K$ be a simplicial complex on the vertex set $[m]$ and let $\underline{X}=\{X_j\}_{j=1}^m$ be a sequence of pointed $CW$-complexes which are non-contractible. For $I\subseteq [m]$, let $j_I \colon (C\underline{X}, \underline{X})^{K_I} \longrightarrow (C\underline{X}, \underline{X})^K$ be the natural map induced by the inclusion $K_I \subseteq K$. The following conditions are equivalent:
\begin{enumerate}
\item $j_I \colon (C\underline{X}, \underline{X})^{K_I} \longrightarrow (C\underline{X}, \underline{X})^K$ is a homotopy equivalence;
\item $\mathrm{core}(K) \subseteq K_I$;
\item $\mathrm{star}_K(v) = K$ for all $v \in [m]\backslash I$;
\item $\mathrm{link}_K(v) = K-\{v\}$ for all $v \in [m]\backslash I$; 
\item $K=\Delta^{m-|I|-1} \ast K_I$.
\end{enumerate}
\end{proposition}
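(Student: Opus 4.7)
My plan is to establish the combinatorial conditions (b)--(e) as equivalent by direct unpacking of definitions, and then bridge to the homotopical condition (a) by iterating Lemma \ref{key lemma} in both directions.

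The equivalences among (b), (c), (d), (e) are essentially definitional. The equivalence (b) $\Leftrightarrow$ (c) is immediate from the definition of $\mathrm{core}(K)$, since $\mathrm{core}(K) \subseteq K_I$ is precisely the condition that every $v$ with $\mathrm{star}_K(v) \neq K$ lies in $I$. The equivalence (c) $\Leftrightarrow$ (d) follows from $\mathrm{link}_K(v) = \{\sigma \in K \mid v \notin \sigma,\ \{v\} \cup \sigma \in K\}$ together with the observation that $\mathrm{star}_K(v) = K$ forces $\{v\} \cup \sigma \in K$ for every simplex $\sigma$. Finally, (c) $\Leftrightarrow$ (e) is the statement that a simplex of $K$ splits as the join of a subset of $[m] \backslash I$ with a simplex of $K_I$ precisely when each vertex $v \in [m] \backslash I$ is a cone vertex of $K$.

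For (e) $\Rightarrow$ (a), I would enumerate $[m] \backslash I = \{v_1,\ldots,v_k\}$ and interpolate the chain of full subcomplexes $K_I = K_{I_0} \subseteq K_{I_1} \subseteq \cdots \subseteq K_{I_k} = K$ with $I_j = I \cup \{v_1,\ldots,v_j\}$. Condition (e) forces $K_{I_j} = \{v_j\} \ast K_{I_{j-1}}$ at each stage, so Lemma \ref{key lemma} renders each inclusion $(C\underline{X}, \underline{X})^{K_{I_{j-1}}} \hookrightarrow (C\underline{X}, \underline{X})^{K_{I_j}}$ a homotopy equivalence; composing through $j = 1,\ldots,k$ expresses $j_I$ as a composite of homotopy equivalences.

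For the converse (a) $\Rightarrow$ (c), I would fix an arbitrary $v \in [m] \backslash I$ and factor $j_I$ through the intermediate full subcomplex $K - \{v\} \supseteq K_I$:
\[ (C\underline{X}, \underline{X})^{K_I} \stackrel{f}{\longrightarrow} (C\underline{X}, \underline{X})^{K - \{v\}} \stackrel{g}{\longrightarrow} (C\underline{X}, \underline{X})^K. \]
Proposition \ref{retract} equips $g$ with a left homotopy inverse, while the hypothesis that $gf = j_I$ is a homotopy equivalence supplies $g$ with a right homotopy inverse (namely $f$ composed with any homotopy inverse of $j_I$). Hence $g$ itself is a homotopy equivalence, and Lemma \ref{key lemma} forces $K = \{v\} \ast (K - \{v\})$, i.e.\ $\mathrm{star}_K(v) = K$. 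Ranging over all $v \in [m] \backslash I$ delivers (c). The only point requiring care is the formal promotion of one-sided homotopy inverses to a two-sided one in this last paragraph; once that and the correct filtration are in hand, the whole proposition is a formal iteration of Lemma \ref{key lemma}.
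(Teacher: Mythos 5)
Your proposal is correct and follows essentially the same route as the paper: the combinatorial equivalences are treated as definitional, (e) $\Rightarrow$ (a) reduces to the cone case of Lemma \ref{key lemma}, and (a) $\Rightarrow$ (c)/(e) uses the retractions from Proposition \ref{retract} to promote the composite homotopy equivalence to each vertex-deletion step before invoking Lemma \ref{key lemma}. The only (cosmetic) difference is that you argue one vertex at a time through $K-\{v\}$ and land on (c), whereas the paper factors $j_I$ through the full chain of deletions $K-\{j_1,\ldots,j_p\}\subseteq\cdots\subseteq K-\{j_1\}\subseteq K$ and concludes (e) directly.
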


\begin{proof}
The equivalence of conditions (b), (c), (d) and (e) follows immediately from the definitions.

(e) $\Rightarrow$ (a): This can be proved exactly as in the proof of Lemma \ref{key lemma}.

(a) $\Rightarrow$ (e): Write $[m]\backslash I=\{j_1,\ldots,j_p\}$ and note that the map $j_I$ factors as a composite of inclusions 
\[ (C\underline{X}, \underline{X})^{K-\{j_1,\ldots,j_p\}} \longrightarrow\cdots\longrightarrow (C\underline{X}, \underline{X})^{K-\{j_1\}}\longrightarrow (C\underline{X}, \underline{X})^K \]
where each map above has a left inverse by Proposition \ref{retract}. Therefore if $j_I$ is a homotopy equivalence, then so is each map in the composite, so by Lemma \ref{key lemma} we obtain
\begin{align*}
K &= \{j_1\}\ast(K-\{j_1\}) \\
&= \{j_1\}\ast\{j_2\}\ast(K-\{j_1,j_2\}) \\
\shortvdotswithin{=} 
&= \{j_1\}\ast\cdots\ast\{j_p\}\ast(K-\{j_1,\ldots,j_p\}) \\
&= \Delta^{m-|I|-1} \ast K_I,
\end{align*}
as desired.
\end{proof}

\section{Proofs of Theorems \ref{homeomorphic} and \ref{homotopy equivalent}}

In this section we restate and prove the main results and discuss some consequences. We begin with a lemma well known to homotopy theorists and include a short proof for completeness.

\begin{lemma} \label{wedge of spheres lemma}
If a space $Y$ is a homotopy retract of a simply-connected wedge of spheres $\bigvee_{\alpha \in \mathcal{I}} S^{n_\alpha}$, then $Y$ has the homotopy type of a wedge of spheres.
\end{lemma}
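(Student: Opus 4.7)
The plan is to build a wedge of spheres equivalent to $Y$ by transferring spherical homology representatives from $W = \bigvee_{\alpha \in \mathcal{I}} S^{n_\alpha}$ down to $Y$ along the retraction, and then applying Whitehead's theorem. Denote the retraction data by $i \colon Y \to W$ and $r \colon W \to Y$ with $r \circ i \simeq \mathrm{id}_Y$.

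First I would record two basic consequences of the retract hypothesis. The map $i_* \colon \pi_1(Y) \to \pi_1(W) = 0$ is a split injection, so $Y$ is simply-connected. Similarly, in every degree $n$ the homomorphism $i_* \colon H_n(Y; \mathbb{Z}) \to H_n(W; \mathbb{Z})$ is a split injection, so $H_n(Y; \mathbb{Z})$ is a direct summand of the free abelian group $H_n(W; \mathbb{Z}) = \bigoplus_{\alpha : n_\alpha = n} \mathbb{Z}$, and is therefore itself free abelian. Since $W$ is simply-connected we may moreover assume every $n_\alpha \ge 2$. (For the application to $(C\underline{X},\underline{X})^K$ in what follows, $Y$ will already have the homotopy type of a CW complex; in general, a retract of a CW complex is dominated by one and so has CW homotopy type by Milnor's theorem, which is all Whitehead requires.)

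Next I would perform the main construction. Fix, for each $n \ge 2$, a $\mathbb{Z}$-basis $\{x_{n,j}\}_j$ of $H_n(Y)$. Because $r_* i_* = \mathrm{id}$, the map $r_* \colon H_n(W) \to H_n(Y)$ is surjective, so each $x_{n,j}$ lifts to an element $y_{n,j} \in H_n(W) = \bigoplus_{n_\alpha = n} \mathbb{Z}$. This element is a finite integer combination of fundamental classes of $n$-spheres in the wedge, hence is represented by some map $\psi_{n,j} \colon S^n \to W$ (using that $\pi_n$ of a wedge contains the direct sum of the $\pi_n$'s of the summands, and $n \ge 2$ so addition is available). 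Setting $\varphi_{n,j} = r \circ \psi_{n,j} \colon S^n \to Y$ gives a map representing $x_{n,j}$ in $H_n(Y)$.

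Finally I would wedge these together to get
\[
\Phi \colon W' \;=\; \bigvee_{n \ge 2,\, j} S^n \;\longrightarrow\; Y,
\]
whose effect on $H_n$ sends the canonical generator of each summand $S^n$ to the chosen basis element $x_{n,j}$, and is therefore an isomorphism in every degree. Since both $W'$ and $Y$ are simply-connected spaces with the homotopy type of CW complexes, Whitehead's theorem promotes this homology isomorphism to a homotopy equivalence, exhibiting $Y$ as a wedge of spheres.

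The only slightly delicate step is the lifting in the second paragraph, but it is essentially automatic once one has the splitting on homology together with $n_\alpha \ge 2$; there is no genuine obstacle in the argument.
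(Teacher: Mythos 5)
Your proof is correct and follows essentially the same route as the paper's: both arguments use the retraction to show $H_*(Y)$ is free abelian and that every homology class of $Y$ is spherical (you do this by lifting basis elements to $\bigvee_\alpha S^{n_\alpha}$ and pushing sphere representatives forward along $r$, which is exactly the paper's use of the naturality of the Hurewicz map), and then conclude with Whitehead's theorem. Your added remarks on simple connectivity and CW homotopy type via domination are correct and only make explicit what the paper leaves implicit.
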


\begin{proof}
Suppose $Y$ is a homotopy retract of $\bigvee_{\alpha \in \mathcal{I}} S^{n_\alpha}$ where $n_\alpha \ge 2$ for all $\alpha \in \mathcal{I}$. Then there is a map $r \colon \bigvee_{\alpha \in \mathcal{I}} S^{n_\alpha} \longrightarrow Y$ inducing a split epimorphism in integral homology and the Hurewicz natural transformation gives a commutative diagram
\[
\xymatrix{
\pi_\ast(\bigvee_{\alpha\in \mathcal{I}} S^{n_\alpha}) \ar[r] \ar[d]^{r_\ast} & H_\ast(\bigvee_{\alpha\in \mathcal{I}} S^{n_\alpha}) \ar[d]^{r_\ast} \\
\pi_\ast(Y) \ar[r] & H_\ast(Y).
}
\]
The bottom horizontal arrow is an epimorphism since the top horizontal and right vertical ones are. By hypothesis, $H_\ast(Y)$ is a graded free abelian group, so by choosing Hurewicz pre-images of the elements of a basis for $H_\ast(Y)$ and taking their wedge sum we obtain a map from a wedge of spheres into $Y$ inducing an isomorphism in homology. This map is therefore a homotopy equivalence by Whitehead's Theorem since it also follows from the hypothesis that $Y$ has the homotopy type of a simply-connected $CW$-complex.
\end{proof}


{
\def\thetheorem{\ref{homeomorphic}}
\begin{theorem}
If $\mathcal{Z}_K$ is homeomorphic to a connected sum of sphere products with two spheres in each product, then $K$ is minimally non-Golod.
\end{theorem}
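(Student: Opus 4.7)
The plan is to verify the two conditions defining minimal non-Golodness of $K$: that $K$ is itself non-Golod, and that $K-\{v\}$ is Golod for every vertex $v\in[m]$. The first is immediate from the hypothesis: a homeomorphism $\mathcal{Z}_K\cong\#_i(S^{p_i}\times S^{q_i})$ makes $\mathcal{Z}_K$ a closed orientable manifold of positive dimension, so Poincar\'e duality forces nontrivial cup products in $H^\ast(\mathcal{Z}_K;\mathbf{k})$; for example, the generators $a_i,b_i$ of each summand satisfy $a_ib_i=[M]\neq 0$. Under the isomorphism $H^\ast(\mathcal{Z}_K;\mathbf{k})\cong\mathrm{Tor}_{\mathbf{k}[v_1,\ldots,v_m]}^\ast(\mathbf{k}[K],\mathbf{k})$ recalled in the introduction, this produces a nontrivial product in Tor, so $\mathbf{k}[K]$ is not Golod.

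For the Golodness of each $K-\{v\}$, the key tool will be the retraction $r\colon \mathcal{Z}_K\to\mathcal{Z}_{K-\{v\}}$ supplied by Proposition~\ref{retract}, which induces an injective ring homomorphism $r^\ast\colon H^\ast(\mathcal{Z}_{K-\{v\}};\mathbf{k})\hookrightarrow H^\ast(\mathcal{Z}_K;\mathbf{k})$. The decisive feature of $\#_i(S^{p_i}\times S^{q_i})$, and the reason the ``two spheres in each product'' hypothesis is essential, is that all nontrivial cup products in its cohomology ring land in the single top degree $D=\dim\mathcal{Z}_K$. Since deleting a vertex strictly decreases the dimension of the moment-angle complex, $\dim\mathcal{Z}_{K-\{v\}}<D$, and in particular $H^D(\mathcal{Z}_{K-\{v\}};\mathbf{k})=0$. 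A nontrivial cup product $\alpha\beta$ in $H^\ast(\mathcal{Z}_{K-\{v\}};\mathbf{k})$ would map via $r^\ast$ to a nontrivial product in $H^\ast(\mathcal{Z}_K;\mathbf{k})$ of the same total degree, forcing that degree to equal $D$ and contradicting $H^D(\mathcal{Z}_{K-\{v\}};\mathbf{k})=0$. Hence all cup products in $H^\ast(\mathcal{Z}_{K-\{v\}};\mathbf{k})$ vanish.

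The hard part will be ruling out nontrivial higher Massey products in $H^\ast(\mathcal{Z}_{K-\{v\}};\mathbf{k})$, since vanishing of cup products does not automatically force vanishing of Massey products. My plan is to exploit the formality of $\mathcal{Z}_K$: each $S^{p_i}\times S^{q_i}$ is formal, and connected sums of simply connected formal closed manifolds are formal, so $\mathcal{Z}_K$ is formal and all of its Massey products vanish. Given $\mu\in\langle\alpha_1,\ldots,\alpha_k\rangle\subseteq H^N(\mathcal{Z}_{K-\{v\}};\mathbf{k})$, the case $N\ge D$ is automatic since the target group vanishes. For $N<D$, naturality of Massey products places $r^\ast\mu$ inside $\langle r^\ast\alpha_1,\ldots,r^\ast\alpha_k\rangle\subseteq H^N(\mathcal{Z}_K;\mathbf{k})$; formality forces this Massey product set to coincide with its own indeterminacy, which in any degree $N<D$ must vanish because each of its generators factors through a cup product in $H^\ast(\mathcal{Z}_K;\mathbf{k})$ of total degree less than $D$ and such cup products are zero. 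Hence $r^\ast\mu=0$, and injectivity of $r^\ast$ gives $\mu=0$. Together with the vanishing of cup products this shows $K-\{v\}$ is Golod. The most delicate point to carry out carefully will be the description of the indeterminacy of a $k$-fold Massey product for $k>3$, whose generators involve products of lower Massey subproducts rather than only simple cup products with the outer terms $\alpha_1$ and $\alpha_k$.
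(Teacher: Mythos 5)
Your argument splits into a cup-product part and a Massey-product part. The cup-product part is correct and genuinely different from the paper's: the injective ring map $r^\ast$ sends a nonzero product in $H^\ast(\mathcal{Z}_{K-\{v\}};\mathbf{k})$ to a nonzero product in $H^\ast(\operatorname{\#}_{k=1}^\ell(S^{n_k}\times S^{n-n_k});\mathbf{k})$, which must lie in degree $n$, while the CW dimension of $\mathcal{Z}_{K-\{v\}}$ is strictly less than $n=m+\dim K+1=\dim\mathcal{Z}_K$. (This is exactly where the homeomorphism hypothesis, as opposed to homotopy equivalence, enters -- compare Example~\ref{example}, where the top nonvanishing degree of $H^\ast(\mathcal{Z}_K)$ is smaller than $\dim\mathcal{Z}_K$ and your inequality fails.)

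The Massey-product part has two genuine gaps. First, the formality you invoke (``connected sums of simply connected formal closed manifolds are formal'') is a theorem of rational homotopy theory: it controls Massey products in characteristic zero only, whereas Golodness requires the vanishing of all Massey products in $\mathrm{Tor}_{\mathbf{k}[v_1,\ldots,v_m]}^\ast(\mathbf{k}[K],\mathbf{k})$ for \emph{every} field $\mathbf{k}$, including $\mathbb{F}_p$. You would need formality of the $\mathbf{k}$-cochain algebra for every $\mathbf{k}$, which you have not established. Second, even granting formality, knowing that $0\in\langle r^\ast\alpha_1,\ldots,r^\ast\alpha_k\rangle$ and $r^\ast\mu\in\langle r^\ast\alpha_1,\ldots,r^\ast\alpha_k\rangle$ does not give $r^\ast\mu=0$ unless the value set is a singleton. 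For $k=3$ the indeterminacy is $r^\ast\alpha_1\cdot H^\ast+H^\ast\cdot r^\ast\alpha_3$, which consists of cup products and so vanishes below degree $n$; but for $k\ge4$ the value set of a Massey product is not a coset of a subgroup, and the dependence on the defining system is governed by lower-order Massey products rather than by cup products with the outer entries, so the step you yourself flag as delicate is precisely the one that is missing. The paper sidesteps both issues geometrically: $j\colon\mathcal{Z}_{K-\{v\}}\to\mathcal{Z}_K$ misses every point with $|z_v|<1$, and since $\mathcal{Z}_K$ is a closed $n$-manifold the complement of a point deformation retracts onto the $(n-1)$-skeleton $\bigvee_{k=1}^\ell(S^{n_k}\vee S^{n-n_k})$; together with the retraction $r$ this exhibits $\mathcal{Z}_{K-\{v\}}$ as a homotopy retract of a simply connected wedge of spheres, hence itself a wedge of spheres by Lemma~\ref{wedge of spheres lemma}, and Golodness of $K-\{v\}$ over every $\mathbf{k}$ then follows from the known implication for moment-angle complexes that are wedges of spheres. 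If you want to complete your proof, replace the formality step with this lifting argument.
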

\addtocounter{theorem}{-1}
}

\begin{proof}
Suppose there is a homeomorphism \[ \mathcal{Z}_K \cong \operatorname*{\#}_{k=1}^\ell (S^{n_k} \times S^{n-n_k}) \]
where $\ell$ is finite and $3 \le n_k \le n-3$ for each $k=1,\ldots,\ell$ since every moment-angle complex is a finite $2$-connected $CW$-complex. Note that $H^*(\mathcal{Z}_K)$ has a non-trivial cup product, so $K$ is not Golod. Let $i\in [m]$ be a vertex of $K$ and let $j\colon\mathcal{Z}_{K-\{i\}} \longrightarrow \mathcal{Z}_K$ be the map induced by the inclusion $K-\{i\} \subseteq K$. It follows from the definition of a polyhedral product \eqref{pp definition} that any point $(z_1,\ldots,z_m) \in (D^2)^m$ with $|z_i|<1$ lies in $\mathcal{Z}_K$ outside the image of $j$, and hence $j$ is not surjective. Since $\mathcal{Z}_K$ is a closed manifold by assumption, the complement of a point in $\mathcal{Z}_K$ deformation retracts onto the $(n-1)$-skeleton of $\mathcal{Z}_K$. Therefore, up to homotopy, $j$ lifts through the $(n-1)$-skeleton of $\operatorname{\#}_{k=1}^\ell (S^{n_k} \times S^{n-n_k})$, which is $\bigvee_{k=1}^\ell(S^{n_k} \vee S^{n-n_k})$ since the connected sum of sphere products has the homotopy type of a wedge of spheres with a single top cell attached by a sum of Whitehead products of the form $w_k\colon S^{n-1} \longrightarrow S^{n_k} \vee S^{n-n_k}$. 

Combining the above observation with the fact that $j$ admits a retraction $r\colon \mathcal{Z}_K \longrightarrow \mathcal{Z}_{K-\{i\}}$ by Proposition \ref{retract}, we obtain a diagram
\[
\xymatrix{
 & \displaystyle\bigvee_{k=1}^\ell (S^{n_k} \vee S^{n-n_k})  \ar[d]  & \\
\mathcal{Z}_{K-\{i\}} \ar[r]^-j \ar[ur] \ar@{=}[dr]  &  \mathcal{Z}_K \ar[d]^r \ar@{^{(}->}[r]  &  (D^2)^m \ar[d]^{\mathrm{proj}} \\ 
 &  \mathcal{Z}_{K-\{i\}} \ar@{^{(}->}[r]  &  (D^2)^{m-1} 
}
\]
where the bottom triangle and square commute and the top triangle commutes up to homotopy. It follows that $\mathcal{Z}_{K-\{i\}}$ is a homotopy retract of $\bigvee_{k=1}^\ell(S^{n_k} \vee S^{n-n_k})$ and hence is homotopy equivalent to a wedge of spheres by Lemma \ref{wedge of spheres lemma}. Consequently, $K-\{i\}$ is Golod, which implies $K$ is minimally non-Golod as this holds for every vertex $i$ of $K$.
\end{proof}

Let $\mathcal{K}$ denote the collection of simplicial complexes whose corresponding moment-angle complexes are homeomorphic to connected sums of sphere products with two spheres in each product. Then $\mathcal{K}$ includes the nerve complexes of all simple polytopes obtained by vertex truncations of one or a product of two simplices and all even dimensional dual neighbourly polytopes, as well as all simplicial complexes obtained from these by applying the simplicial wedge construction or vertex truncation operations in any order (see \cite{GL} and \cite{CFW}).

\begin{corollary}
Let $K\in \mathcal{K}$. Then every proper full subcomplex $K_I$ of $K$ has the property that $\mathcal{Z}_{K_I}$ is homotopy equivalent to a wedge of spheres. In particular, the Stanley--Reisner ring $\mathbf{k}[K_I]$ is Golod over any ring $\mathbf{k}$.
\end{corollary}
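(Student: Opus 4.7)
The plan is to combine the retraction property of full subcomplexes (Proposition \ref{retract}) with the strengthened conclusion buried in the proof of Theorem \ref{homeomorphic}: for any $K \in \mathcal{K}$ and any vertex $i$, the deletion $\mathcal{Z}_{K-\{i\}}$ is not merely Golod but is genuinely homotopy equivalent to a wedge of spheres (it appears there as a homotopy retract of $\bigvee_{k=1}^\ell(S^{n_k}\vee S^{n-n_k})$, to which Lemma \ref{wedge of spheres lemma} applies).

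Given a proper full subcomplex $K_I$ with $I \subsetneq [m]$, I would choose any vertex $i \in [m]\setminus I$ and observe that $K_I = (K-\{i\})_I$ is itself a full subcomplex of $K-\{i\}$. Proposition \ref{retract} then gives that $\mathcal{Z}_{K_I}$ is a homotopy retract of $\mathcal{Z}_{K-\{i\}}$. Since $\mathcal{Z}_{K-\{i\}}$ is a simply-connected (in fact $2$-connected) wedge of spheres by the previous paragraph, Lemma \ref{wedge of spheres lemma} applies to conclude that $\mathcal{Z}_{K_I}$ is homotopy equivalent to a wedge of spheres.

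For the second assertion, I would note that a wedge of spheres is a suspension, so all cup products and higher Massey products in the reduced cohomology of $\mathcal{Z}_{K_I}$ vanish. Translating via the isomorphism $H^*(\mathcal{Z}_{K_I};\mathbf{k}) \cong \mathrm{Tor}^*_{\mathbf{k}[v_1,\ldots,v_{|I|}]}(\mathbf{k}[K_I],\mathbf{k})$ stated in the introduction, this means $\mathbf{k}[K_I]$ is Golod over any field, and hence over any ring $\mathbf{k}$.

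There is no real obstacle here; the corollary is essentially a bookkeeping consequence of the stronger statement extracted from the proof of Theorem \ref{homeomorphic}. The only subtle point is to make sure one cites the wedge-of-spheres conclusion produced in that proof (via Lemma \ref{wedge of spheres lemma}), rather than the weaker minimally non-Golod conclusion recorded in the statement of Theorem \ref{homeomorphic} itself.
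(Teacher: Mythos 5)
Your argument is correct and is exactly the intended one: the paper states this corollary without proof as an immediate consequence of the proof of Theorem \ref{homeomorphic}, where $\mathcal{Z}_{K-\{i\}}$ is shown to be a homotopy retract of $\bigvee_{k=1}^\ell(S^{n_k}\vee S^{n-n_k})$ and hence a wedge of spheres, so that any proper $K_I$ sits inside some $K-\{i\}$ and Proposition \ref{retract} together with Lemma \ref{wedge of spheres lemma} finishes the job. Your identification of the need to use the stronger wedge-of-spheres conclusion from that proof, rather than the statement of the theorem itself, is precisely the right observation.
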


It is not true that $K$ is minimally non-Golod whenever $\mathcal{Z}_K$ has the \emph{homotopy type} of a connected sum of two-fold products of spheres. We describe the smallest possible counterexample below before turning to the proof of Theorem \ref{homotopy equivalent}.

\begin{example} \label{example}
Consider the simplicial complex $K$ on $5$ vertices with facets $\{1,2,5\}$, $\{2,3,5\}$, $\{3,4,5\}$ and $\{1,4,5\}$. Observe that $K$ is the cone over the boundary of a square and can be written as the join $K=K_4 \ast \{5\}$. It is easy to see that $\mathcal{Z}_{K_4}\cong S^3\times S^3$. (More generally, if $K_m$ is the boundary of an $m$-gon with $m\ge 4$, then $\mathcal{Z}_{K_m}$ is homeomorphic to a connected sum of sphere products by \cite{BM}.) It follows that $\mathcal{Z}_K$ has the homotopy type of a connected sum of sphere products since
\[ \mathcal{Z}_K \cong \mathcal{Z}_{K_4} \times \mathcal{Z}_{\{5\}} \cong S^3\times S^3\times D^2 \simeq S^3\times S^3, \]
but $K$ is not minimally non-Golod since its deletion complex $K-\{5\}=K_4$ is not Golod. 
\end{example}

{
\def\thetheorem{\ref{homotopy equivalent}}
\begin{theorem}
If $\mathcal{Z}_K$ is homotopy equivalent to a connected sum of sphere products with two spheres in each product, then $K = \Delta^d \ast L$ for some $d\ge -1$ where $L$ is Gorenstein* and minimally non-Golod. 
\end{theorem}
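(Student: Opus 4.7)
My plan is to take $L := \mathrm{core}(K)$; the canonical decomposition \eqref{join} then gives $K = \Delta^d \ast L$ for some $d \ge -1$, and contractibility of $\mathcal{Z}_{\Delta^d}$ yields $\mathcal{Z}_L \simeq \mathcal{Z}_K$. Thus $\mathcal{Z}_L$ is itself homotopy equivalent to
\[M = \operatorname*{\#}_{k=1}^\ell (S^{n_k}\times S^{n-n_k}),\]
where $3 \le n_k \le n-3$ since $\mathcal{Z}_L$ is $2$-connected, and it suffices to prove that $L$ is Gorenstein* and minimally non-Golod.

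For the Gorenstein* claim, I would use that $\mathcal{Z}_L \simeq M$ is a closed orientable $n$-manifold and hence a Poincar\'e duality complex. Combined with $L = \mathrm{core}(L)$, this forces $L$ to be Gorenstein* via the standard correspondence between Poincar\'e duality of $\mathcal{Z}_L$ and the Gorenstein property of $\mathbf{k}[L]$, mediated by the isomorphism $H^*(\mathcal{Z}_L) \cong \mathrm{Tor}_{\mathbf{k}[v_1,\ldots,v_m]}(\mathbf{k}[L],\mathbf{k})$. Non-Golodness of $L$ is then immediate, as the non-trivial cup products in $H^*(M) \cong H^*(\mathcal{Z}_L)$ produce non-trivial products in this Tor algebra.

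The substantive step is to show that $L - \{i\}$ is Golod for every vertex $i \in [m]$. To do this, I propose to prove that $\mathcal{Z}_{L-\{i\}}$ is homotopy equivalent to a wedge of spheres, using three ingredients: (i) by Proposition \ref{retract}, the inclusion $j\colon \mathcal{Z}_{L-\{i\}}\hookrightarrow\mathcal{Z}_L$ admits a retraction $r\colon \mathcal{Z}_L \to \mathcal{Z}_{L-\{i\}}$; (ii) since $L$ is Gorenstein*, $|L|$ is a homology $(n-m-1)$-sphere and therefore
\[\dim\mathcal{Z}_{L-\{i\}} \le (m-1)+\dim(L-\{i\})+1 \le n-1;\]
(iii) $M$ admits a CW structure with $(n-1)$-skeleton $W := \bigvee_{k=1}^\ell(S^{n_k}\vee S^{n-n_k})$ and a single top $n$-cell attached by a sum of Whitehead products. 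Combining (ii)--(iii) with cellular approximation, the composite $\mathcal{Z}_{L-\{i\}} \xrightarrow{j} \mathcal{Z}_L \simeq M$ is homotopic to a map $\mathcal{Z}_{L-\{i\}} \xrightarrow{\tilde{j}} W \hookrightarrow M$. The composite of $W \hookrightarrow M \simeq \mathcal{Z}_L$ with $r$ then splits $\tilde{j}$, exhibiting $\mathcal{Z}_{L-\{i\}}$ as a homotopy retract of the simply-connected wedge of spheres $W$; Lemma \ref{wedge of spheres lemma} then gives the desired conclusion.

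The principal obstacle I anticipate lies in the Gorenstein* deduction, which requires rigorously translating the homotopy-theoretic Poincar\'e duality of $\mathcal{Z}_L$ into the combinatorial-algebraic Gorenstein* property of $L$. One could either cite a known correspondence in the literature or argue directly, for instance by applying Poincar\'e duality to the BBCG splitting $\Sigma\mathcal{Z}_L \simeq \bigvee_{I\notin L}\Sigma^{|I|+2}|L_I|$ to constrain the homotopy types of the full subcomplexes $|L_I|$ and force the required link conditions on $L$. Once Gorenstein* is in hand, the dimension bound and retract argument for the minimally non-Golod property go through as described.
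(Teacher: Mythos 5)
Your proposal is correct, and its core mechanism --- exhibiting $\mathcal{Z}_{L-\{i\}}$ as a homotopy retract of the $(n-1)$-skeleton $\bigvee_{k=1}^{\ell}(S^{n_k}\vee S^{n-n_k})$ and invoking Lemma \ref{wedge of spheres lemma}, with Gorenstein*-ness coming from Poincar\'e duality of the Tor algebra --- is the same as the paper's. Two of your steps, however, take a genuinely different route. First, you get the join decomposition for free by declaring $L=\mathrm{core}(K)$, whereas the paper derives $K=\Delta^d\ast L$ from a cohomological dichotomy: Proposition \ref{retract} makes $j^\ast\colon\mathbb{Z}\cong H^n(\mathcal{Z}_K)\to H^n(\mathcal{Z}_{K-\{i\}})$ split surjective, so it is either an isomorphism (whence Poincar\'e duality forces $j$ to be a homotopy equivalence and Lemma \ref{key lemma} gives $K=\{i\}\ast(K-\{i\})$) or $H^n(\mathcal{Z}_{K-\{i\}})=0$. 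Second, and more substantively, the paper uses the vanishing branch of that dichotomy to produce the lift through the $(n-1)$-skeleton: $H^n(\mathcal{Z}_{L-\{i\}})=0$ plus the retraction shows $\mathcal{Z}_{L-\{i\}}$ has the homotopy type of a simply-connected complex of dimension less than $n$. You instead bound the actual CW dimension of $\mathcal{Z}_{L-\{i\}}$ by $n-1$ using the fact that a Gorenstein* complex is a generalized homology sphere of the appropriate dimension. Your route is valid, but it makes the Golodness of $L-\{i\}$ logically dependent on first establishing Gorenstein*-ness (via the correspondence you flag, which is \cite[Theorem~4.6.8]{BP}) and on Stanley's characterization of Gorenstein* complexes to pin down $\dim L$; the paper's argument is independent of the Gorenstein* step and needs only the self-contained Lemma \ref{key lemma}. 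Two minor points to tighten: the $m$ in your dimension count must be the number of vertices of $L$ rather than of $K$, and $\mathcal{Z}_L$ is a priori only homotopy equivalent to a closed manifold --- which suffices, since all you use is Poincar\'e duality of its cohomology ring.
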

\addtocounter{theorem}{-1}
}

\begin{proof}
Suppose there is a homotopy equivalence \[ \mathcal{Z}_K \simeq \operatorname*{\#}_{k=1}^\ell (S^{n_k} \times S^{n-n_k}) \]
for some $\ell\ge 1$ and $3 \le n_k \le n-3$ for each $k=1,\ldots,\ell$. For each vertex $i\in [m]$, consider the natural inclusion $j\colon\mathcal{Z}_{K-\{i\}} \longrightarrow \mathcal{Z}_K$ and the induced homomorphism 
\[ j^\ast \colon \mathbb{Z}\cong H^n(\mathcal{Z}_K) \longrightarrow H^n(\mathcal{Z}_{K-\{i\}}). \]
By Proposition \ref{retract}, $j^\ast$ has a right inverse, so either $j^\ast$ is an isomorphism or else $H^n(\mathcal{Z}_{K-\{i\}})=0$. If $j^\ast$ is an isomorphism, then the Poincar\'e duality of $H^\ast(\mathcal{Z}_K)$ implies that $j$ induces an isomorphism in cohomology in all dimensions and is thus a homotopy equivalence. In this case, we obtain that $K=\{i\}\ast(K-\{i\})$ by Lemma~\ref{key lemma}. It follows that the set of all vertices $i\in [m]$ for which the map $j^\ast$ above is an isomorphism span a simplex $\Delta^d$ in $K$ and that $K=\Delta^d \ast L$, where $L$ is the full subcomplex of $K$ on the set of vertices $i\in [m]$ for which $j\colon\mathcal{Z}_{K-\{i\}} \longrightarrow \mathcal{Z}_K$ is not a homotopy equivalence. (Note that $L=\mathrm{core}(K)$ by Proposition \ref{TFAE}, and that $-1\le d\le m-5$ since $L$ is a simplicial complex on $m-d-1$ vertices and $\mathcal{Z}_L\simeq\mathcal{Z}_K$ cannot have the homotopy type of a connected sum of two-fold products of spheres if $L$ has less than $4$ vertices.)

For each vertex $i$ of $L$, we have that $H^n(\mathcal{Z}_{L-\{i\}})=0$. Since $\mathcal{Z}_{L-\{i\}}$ is a retract of $\mathcal{Z}_L\simeq\mathcal{Z}_K \simeq \operatorname*{\#}_{k=1}^\ell (S^{n_k} \times S^{n-n_k})$, it follows that $\mathcal{Z}_{L-\{i\}}$ has the homotopy type of a simply-connected $CW$-complex of dimension less than $n$. Therefore the map $\mathcal{Z}_{L-\{i\}} \longrightarrow \mathcal{Z}_L$ lifts up to homotopy through the $(n-1)$-skeleton of $\mathcal{Z}_L\simeq \operatorname*{\#}_{k=1}^\ell (S^{n_k} \times S^{n-n_k})$. The same argument as in the proof of Theorem \ref{homeomorphic} now shows that $\mathcal{Z}_{L-\{i\}}$ is homotopy equivalent to a wedge of spheres and hence that $L-\{i\}$ is Golod.

Finally, it follows from the Poincar\'e duality of $H^\ast(\mathcal{Z}_K)\cong \mathrm{Tor}_{\mathbb{Z}[v_1,\ldots,v_m]}^\ast(\mathbb{Z}[K],\mathbb{Z})$ that $K$ is a Gorenstein complex (see \cite[Theorem~4.6.8]{BP}). Thus $L=\mathrm{core}(K)$ is a Gorenstein* complex.
\end{proof}

\begin{remark}
A combinatorial-topological characterization due to Stanley \cite{S} states that a simplicial complex $K$ is Gorenstein* if and only if $K$ is a generalized homology sphere. In \cite{C}, it was shown that a moment-angle complex $\mathcal{Z}_K$ is a closed topological manifold of dimension $m+n$ if and only if $K$ is a generalized homology $(n-1)$-sphere. In particular, any moment-angle complex satisfying the hypothesis of Theorem \ref{homotopy equivalent} is in fact homeomorphic to a product of disks and a closed orientable manifold with the homotopy type of a connected sum of sphere products. 
\end{remark}

\section{An analogue for real moment-angle complexes}

In this section we prove an analogue of Theorem \ref{homotopy equivalent} for real moment-angle complexes. Recall that the real moment-angle complex corresponding to $K$ is defined by the polyhedral product $\mathcal{R}_K=(C\underline{X},\underline{X})^K$ for the sequence $\underline{X}=\{X_i\}_{i=1}^m$ with $X_i=S^0$ for each $i=1,\ldots,m$.

\begin{example}
Let $K_m$ be the boundary of an $m$-gon. If $m\ge 4$, then the corresponding real moment-angle complex $\mathcal{R}_{K_m} \cong \operatorname*{\#}_{k=1}^g(S^1\times S^1)$ is an orientable surface of genus $g=1+(m-4)2^{m-3}$ by a result attributed to Coxeter (see \cite[Proposition~4.1.8]{BP}). In this case, each deletion complex $K_m-\{i\}$ is a path graph which is Golod and $\mathcal{R}_{K_m-\{i\}}$ is homotopy equivalent to a wedge of circles. 
\end{example}

As the example above illustrates, real moment-angle complexes need not be simply-connected. For this reason, we will need a stronger version of Lemma~\ref{wedge of spheres lemma}. A proof that the statement of Lemma~\ref{wedge of spheres lemma} still holds without the simply-connectedness hypothesis, provided that the index set $\mathcal{I}$ is finite, is given in \cite[Theorem~3.3]{MMM}. 

A further modification to the proof of Theorem \ref{homotopy equivalent} is required to relate the homotopy type of $\mathcal{R}_K$ to the homotopy type of $\mathcal{Z}_K$ and hence to the Golodness of $K$. For this, we refer to the work of Iriye and Kishimoto \cite{IK2} on the fat wedge filtration of $\mathcal{R}_K$ and its relation to the homotopy type of polyhedral products of the form $(C\underline{X},\underline{X})^K$.

\begin{theorem} \label{real}
If $\mathcal{R}_K$ is homotopy equivalent to a connected sum of sphere products with two spheres in each product, then $K = \Delta^d \ast L$ for some $d\ge -1$ where $L$ is minimally non-Golod.
\end{theorem}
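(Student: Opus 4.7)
The plan is to follow the proof of Theorem \ref{homotopy equivalent} in broad strokes, with two substitutions suggested by the preceding discussion: Lemma~\ref{wedge of spheres lemma} is replaced by its finite, non-simply-connected strengthening \cite[Theorem~3.3]{MMM}, and the implication ``$\mathcal{R}_{L-\{i\}}$ is a wedge of spheres $\Rightarrow L-\{i\}$ is Golod'' is supplied by the fat-wedge-filtration analysis of Iriye--Kishimoto \cite{IK2} relating the homotopy types of $\mathcal{R}_K$ and $\mathcal{Z}_K$ to combinatorial properties of $K$.

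Fix a homotopy equivalence $\mathcal{R}_K \simeq \#_{k=1}^\ell(S^{n_k}\times S^{n-n_k})$. For each vertex $i\in [m]$, the inclusion $j\colon \mathcal{R}_{K-\{i\}} \to \mathcal{R}_K$ admits a retraction by Proposition~\ref{retract}, so $j^\ast$ is split surjective in every degree. Combining this with Poincar\'e duality of $H^\ast(\mathcal{R}_K)$ gives the same dichotomy as in Theorem~\ref{homotopy equivalent}: either $j^\ast$ is an isomorphism in top degree $n$ -- in which case the cup-product/PD argument (if $j^\ast\alpha = 0$ with $\alpha\ne 0$, pair $\alpha$ with a PD partner $\beta$ so $\alpha\cup\beta$ is a top class, contradicting top-degree injectivity of $j^\ast$) upgrades $j^\ast$ to an isomorphism in every degree, so that $\Sigma j$ becomes a homotopy equivalence of simply-connected CW complexes -- or else $H^n(\mathcal{R}_{K-\{i\}}) = 0$. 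In the first case the BBCG-splitting argument in the proof of Lemma~\ref{key lemma}, which is naturally a statement about $\Sigma j$, gives $K = \{i\}\ast (K-\{i\})$. The vertices where this happens span a simplex $\Delta^d$; by Proposition~\ref{TFAE} the remaining vertices support $L = \mathrm{core}(K)$, and $K = \Delta^d \ast L$.

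Now fix a vertex $i$ of $L$, so $H^n(\mathcal{R}_{L-\{i\}}) = 0$ and the integral homology of $\mathcal{R}_{L-\{i\}}$ (a summand of the free abelian $H_\ast(\mathcal{R}_L)$) is free and vanishes in degrees $\ge n$. Writing $\mathcal{R}_L \simeq W \cup_w e^n$ with $W = \bigvee_k (S^{n_k}\vee S^{n-n_k})$ and $w$ a sum of Whitehead products, the obstruction to lifting the inclusion $\mathcal{R}_{L-\{i\}}\to\mathcal{R}_L$ through $W$ is the composite $\mathcal{R}_{L-\{i\}}\to\mathcal{R}_L/W\simeq S^n$, whose cohomology pullback vanishes. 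A Hopf-classification/suspension argument (using the natural CW structure on $\mathcal{R}_{L-\{i\}}$, or passing to the simply-connected $\Sigma\mathcal{R}_L \simeq \Sigma W \vee S^{n+1}$ where $\Sigma w = 0$) shows this composite is null-homotopic, producing a lift of the inclusion through $W$ and exhibiting $\mathcal{R}_{L-\{i\}}$ as a homotopy retract of the finite wedge of spheres $W$. By \cite[Theorem~3.3]{MMM}, $\mathcal{R}_{L-\{i\}}$ is itself homotopy equivalent to a finite wedge of spheres.

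Finally, \cite{IK2} carries ``$\mathcal{R}_{L-\{i\}}$ is a wedge of spheres'' to a wedge-of-spheres conclusion for $\mathcal{Z}_{L-\{i\}}$ via the triviality of the fat wedge filtration, hence to Golodness of $L-\{i\}$; the same correspondence shows $L$ is not Golod, since $\mathcal{R}_L$ has non-trivial cup products as a connected sum of sphere products and cannot itself be a wedge of spheres. Therefore $L$ is minimally non-Golod. I expect the null-homotopy of the lifting obstruction to be the most delicate step: without the simple-connectivity of $\mathcal{R}_{L-\{i\}}$ available in the complex case, a naive dimensional argument does not apply, and the proof must combine Hopf classification with the suspension trick, where both $\Sigma w = 0$ and simple-connectivity become available.
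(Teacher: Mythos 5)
Your proposal follows essentially the same route as the paper's proof: reduce to $L=\mathrm{core}(K)$ by observing that the argument of Lemma~\ref{key lemma} only requires $\Sigma j$ to be an equivalence (thereby sidestepping the failure of simple connectivity), use the retraction together with Poincar\'e duality to force $H^n(\mathcal{R}_{L-\{i\}})=0$ and lift off the top cell, replace Lemma~\ref{wedge of spheres lemma} by \cite[Theorem~3.3]{MMM}, and invoke the fat wedge filtration of \cite{IK2} to convert ``$\mathcal{R}_{L-\{i\}}$ is a wedge of spheres'' into Golodness of $L-\{i\}$. The step you flag as delicate (nullity of the lifting obstruction for the non-simply-connected $\mathcal{R}_{L-\{i\}}$) is treated at least as tersely in the paper itself, and your closing remark that $L$ is non-Golod addresses a point the paper's proof leaves implicit.
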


\begin{proof}
Suppose $\mathcal{R}_K$ is homotopy equivalent to a connected sum of sphere products $\operatorname*{\#}_{k=1}^\ell (S^{n_k} \times S^{n-n_k})$ with $\ell\ge 1$ and $1 \le n_k \le n-1$ for each $k=1,\ldots,\ell$. As in the proof of Theorem \ref{homotopy equivalent}, $K=\Delta^d\ast L$ where $L=\mathrm{core}(K)$ has the property that $j\colon\mathcal{R}_{L-\{i\}} \longrightarrow \mathcal{R}_L$ is not a homotopy equivalence for any vertex $i$ of $L$ by Proposition \ref{TFAE}. A priori, this does not immediately imply that $j$ does not induce an isomorphism in cohomology since $\mathcal{R}_L$ and its retract $\mathcal{R}_{L-\{i\}}$ are not necessarily simply-connected. However, the proof of the forward implication in Lemma \ref{key lemma} shows that if $\Sigma j\colon \Sigma\mathcal{R}_{L-\{i\}} \longrightarrow \Sigma\mathcal{R}_L$ is a homotopy equivalence, then $L=\{i\}\ast (L-\{i\})$, contradicting that $\{i\} \in L=\mathrm{core}(L)$. Thus $\Sigma j$ is not a homotopy equivalence, which implies that $\Sigma j$ does not induce an isomorphism in cohomology since the suspensions $\Sigma\mathcal{R}_{L-\{i\}}$ and $\Sigma\mathcal{R}_L$ are simply-connected. It follows from the Poincar\'e duality of $H^\ast(\mathcal{R}_L)\cong H^\ast(\operatorname*{\#}_{k=1}^\ell (S^{n_k} \times S^{n-n_k}))$ that 
\[ j^\ast\colon \mathbb{Z} \cong H^n(\mathcal{R}_L) \longrightarrow H^n(\mathcal{R}_{L-\{i\}}) \] 
is not an isomorphism, and hence the retract $\mathcal{R}_{L-\{i\}}$ does not contain the top cell of $\mathcal{R}_L$. Since $\mathcal{R}_{L-\{i\}}$ is then a homotopy retract of $\bigvee_{k=1}^\ell (S^{n_k} \times S^{n-n_k})$, we conclude that $\mathcal{R}_{L-\{i\}}$ is homotopy equivalent to a wedge of spheres by \cite[Theorem~3.3]{MMM}.

In \cite{IK2}, the fat wedge filtration of a real moment-angle complex is shown to be a cone decomposition. Since for each vertex $i$ of $L$, $\mathcal{R}_{L-\{i\}}$ is homotopy equivalent to a wedge of spheres, it follows that the attaching maps in this cone decomposition for $\mathcal{R}_{L-\{i\}}$ are null homotopic and by \cite[Theorem 1.2]{IK2}, the decomposition of $\Sigma (C\underline{X},\underline{X})^{L-\{i\}}$ in Theorem \ref{BBCG splitting} desuspends for any $\underline{X}$. In particular, $\mathcal{Z}_{L-\{i\}}$ is a suspension, which implies that $L-\{i\}$ is Golod (see \cite[Proposition~6.5]{IK2}). 
\end{proof}

\begin{corollary}
If $\mathcal{R}_K$ is homeomorphic to a connected sum of sphere products with two spheres in each product, then $K$ is minimally non-Golod.
\end{corollary}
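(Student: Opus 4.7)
The plan is to derive the corollary from Theorem \ref{real} and then to eliminate any nontrivial cone factor from the resulting join decomposition by a short dimension count. Since a homeomorphism is in particular a homotopy equivalence, Theorem \ref{real} applies and gives a decomposition $K = \Delta^d \ast L$ for some $d \geq -1$ with $L$ minimally non-Golod; to conclude that $K$ itself is minimally non-Golod, it suffices to show that $d = -1$, so that $K = L$.

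For this step I would exploit the fact that the real polyhedral product functor carries simplicial joins to Cartesian products, which yields the homeomorphism
\[ \mathcal{R}_K \cong \mathcal{R}_{\Delta^d} \times \mathcal{R}_L \cong (D^1)^{d+1} \times \mathcal{R}_L. \]
Since $(D^1)^{d+1}$ is contractible, this forces $\mathcal{R}_K \simeq \mathcal{R}_L$. Writing $n$ for the dimension of the closed manifold $\operatorname*{\#}_{k=1}^\ell (S^{n_k} \times S^{n-n_k})$ to which $\mathcal{R}_K$ is homeomorphic, we have $n = \dim \mathcal{R}_K = \dim K + 1$, while the CW dimension of $\mathcal{R}_L$ is $\dim L + 1 = n - d - 1$. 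If $d \geq 0$, this is strictly less than $n$, so $H^n(\mathcal{R}_L;\mathbb{Z}) = 0$. On the other hand, the hypothesized homeomorphism identifies $\mathcal{R}_K$ with a closed orientable $n$-manifold (connected sums of closed orientable manifolds are closed orientable), which forces $H^n(\mathcal{R}_K;\mathbb{Z}) \cong \mathbb{Z}$, contradicting $\mathcal{R}_K \simeq \mathcal{R}_L$. Hence $d = -1$, and $K = L$ is minimally non-Golod.

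I do not anticipate any serious obstacles here, since Theorem \ref{real} already packages the substantive homotopical content. The only point requiring a bit of care is the dimension count above: a nontrivial cone factor in $K$ would drop the CW dimension of $\mathcal{R}_K \simeq \mathcal{R}_L$ strictly below the top-cell dimension demanded by the closed manifold structure on $\mathcal{R}_K$, and it is precisely this incompatibility under homeomorphism (as opposed to mere homotopy equivalence, as illustrated by the analogue of Example \ref{example}) that upgrades the conclusion of Theorem \ref{real} from ``$\mathrm{core}(K)$ is minimally non-Golod'' to ``$K$ is minimally non-Golod.''
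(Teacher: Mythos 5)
Your proposal is correct and follows essentially the same route as the paper: apply Theorem \ref{real} to get $K=\Delta^d\ast L$, observe $\mathcal{R}_K\cong D^{d+1}\times\mathcal{R}_L$, and use the closed-manifold hypothesis to force $d=-1$. The paper simply asserts that a boundaryless manifold admits no positive-dimensional disk factor, whereas your top-cohomology dimension count ($H^n(\mathcal{R}_L)=0$ versus $H^n(\mathcal{R}_K)\cong\mathbb{Z}$) spells out the same obstruction explicitly.
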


\begin{proof}
Under the given assumption, $K=\Delta^d \ast L$ for some minimally non-Golod complex $L$ by Theorem \ref{real}. Therefore, $\mathcal{R}_K\cong \mathcal{R}_{\Delta^d} \times \mathcal{R}_L \cong D^{d+1} \times \mathcal{R}_L$. But since $\mathcal{R}_K$ is a manifold without boundary by assumption, the disk $D^{d+1}$ must have dimension $0$, so $d=-1$ and $K=L$ is minimally non-Golod.
\end{proof}

\end{document}